\definecolor{pennblue}{cmyk}{1,0.65,0,0.30}
\definecolor{pennred}{cmyk}{0,1,0.65,0.34}
\definecolor{mygreen}{rgb}{0.10,0.50,0.10}
\newcommand \blue[1]        {#1}
\newcommand{\includesvg}[2][scale=1]{\includegraphics[#1]{#2.pdf}}
\algrenewcommand\algorithmicdo{}
\newtheorem{theorem}{Theorem}
\newtheorem*{theorem*}{Theorem}
\newtheorem{proposition}{Proposition}
\newtheorem{corollary}{Corollary}
\newtheorem{lemma}{Lemma}
\theoremstyle{definition}
\newtheorem{definition}{Definition}
\newtheorem*{problem*}{Problem}
\newtheorem{remark}{Remark}
\newtheorem*{remark*}{Remark}
\newtheoremstyle{assume}
  {3pt}
  {3pt}
  {}
  {}
  {\bf}
  {}
  { }
  {\thmname{#1}.\thmnumber{#2}\thmnote{ \textnormal{(\textit{#3})}}}
\theoremstyle{assume}
\DeclareMathOperator{\E}{\mathbb{E}}
\DeclareMathOperator{\trace}{Tr}
\DeclareMathOperator{\diag}{diag}
\DeclareMathOperator*{\argmin}{argmin}
\DeclareMathOperator*{\minimize}{minimize}
\DeclareMathOperator{\subjectto}{subject\ to}
\DeclareFontFamily{U}{mathb}{\hyphenchar\font45}
\DeclareFontShape{U}{mathb}{m}{n}{
<-6> mathb5 <6-7> mathb6 <7-8> mathb7
<8-9> mathb8 <9-10> mathb9
<10-12> mathb10 <12-> mathb12
}{}
\DeclareSymbolFont{mathb}{U}{mathb}{m}{n}
\DeclareMathSymbol{\llcurly}{\mathrel}{mathb}{"CE}
\DeclareMathSymbol{\ggcurly}{\mathrel}{mathb}{"CF}
\newcommand{\norm}[1]{\ensuremath{\left\| #1 \right\|}}
\newcommand{\abs}[1]{\ensuremath{{\left\vert #1 \right\vert}}}
\newcommand{\floor}[1]{\left \lfloor #1 \right \rfloor}
\def\nnil{\nil}
\newcounter{prob}
\newenvironment{prob}[1][\nil]{%
	\def\tmp{#1}
	\equation
	\ifx\tmp\nnil
		\refstepcounter{prob}
		\tag{P\Roman{prob}}
	\else
		\tag{\tmp}
	\fi
	\aligned%
}{%
	\endaligned\endequation%
}
\newcommand{\calA}{\ensuremath{\mathcal{A}}}
\newcommand{\calB}{\ensuremath{\mathcal{B}}}
\newcommand{\calC}{\ensuremath{\mathcal{C}}}
\newcommand{\calE}{\ensuremath{\mathcal{E}}}
\newcommand{\calG}{\ensuremath{\mathcal{G}}}
\newcommand{\calI}{\ensuremath{\mathcal{I}}}
\newcommand{\calL}{\ensuremath{\mathcal{L}}}
\newcommand{\calS}{\ensuremath{\mathcal{S}}}
\newcommand{\calT}{\ensuremath{\mathcal{T}}}
\newcommand{\calU}{\ensuremath{\mathcal{U}}}
\newcommand{\calV}{\ensuremath{\mathcal{V}}}
\newcommand{\calW}{\ensuremath{\mathcal{W}}}
\newcommand{\calX}{\ensuremath{\mathcal{X}}}
\newcommand{\calY}{\ensuremath{\mathcal{Y}}}
\newcommand{\calZ}{\ensuremath{\mathcal{Z}}}
\newcommand{\bzero}{\ensuremath{\bm{0}}}
\newcommand{\bA}{\ensuremath{\bm{A}}}
\newcommand{\bB}{\ensuremath{\bm{B}}}
\newcommand{\bD}{\ensuremath{\bm{D}}}
\newcommand{\bG}{\ensuremath{\bm{G}}}
\newcommand{\bH}{\ensuremath{\bm{H}}}
\newcommand{\bI}{\ensuremath{\bm{I}}}
\newcommand{\bL}{\ensuremath{\bm{L}}}
\newcommand{\bM}{\ensuremath{\bm{M}}}
\newcommand{\bP}{\ensuremath{\bm{P}}}
\newcommand{\bQ}{\ensuremath{\bm{Q}}}
\newcommand{\bR}{\ensuremath{\bm{R}}}
\newcommand{\bW}{\ensuremath{\bm{W}}}
\newcommand{\bb}{\ensuremath{\bm{b}}}
\newcommand{\bu}{\ensuremath{\bm{u}}}
\newcommand{\bw}{\ensuremath{\bm{w}}}
\newcommand{\bx}{\ensuremath{\bm{x}}}
\newcommand{\by}{\ensuremath{\bm{y}}}
\newcommand{\bz}{\ensuremath{\bm{z}}}
\newcommand{\bPi}{\ensuremath{\bm{\Pi}}}
\newcommand{\bSigma}{\ensuremath{\bm{\Sigma}}}
\newcommand{\setR}{\ensuremath{\mathbb{R}}}
\def\st/{\textsuperscript{st}}
\def\nd/{\textsuperscript{nd}}
\def\rd/{\textsuperscript{rd}}
\def\th/{\textsuperscript{th}}
\newcommand{\bxb}{\ensuremath{\bm{{\bar{x}}}}}
\newcommand{\bPb}{\ensuremath{\bm{{\bar{P}}}}}
\newcommand{\bPu}{\ensuremath{\bm{{\underaccent{\bar}{P}}}}}
\newcommand{\bbt}{\ensuremath{\bm{{\tilde{b}}}}}
\newcommand{\bPt}{\ensuremath{\bm{{\tilde{P}}}}}
\newcommand{\calVbar} {\ensuremath{\overline{\calV}}}
\title{Approximately Supermodular Scheduling\\Subject to Matroid Constraints}
\author{Luiz~F.~O.~Chamon, Alexandre~Amice, and Alejandro~Ribeiro%
\thanks{Department of Electrical and Systems Engineering, University of Pennsylvania.
 e-mail: \mbox{\texttt{\{luizf, amice, aribeiro\}@seas.upenn.edu}}.
Part of the results in this paper appeared in~\cite{Chamon19m}.}%
}
\begin{document}

\maketitle

\begin{abstract}

Control scheduling refers to the problem of assigning agents or actuators to act upon a dynamical system at specific times so as to minimize a quadratic control cost, such as the objectives of the Linear-quadratic-Gaussian~(LQG) or Linear Quadratic Regulator~(LQR) problems. When budget or operational constraints are imposed on the schedule, this problem is in general NP-hard and its solution can therefore only be approximated even for moderately sized systems. The quality of this approximation depends on the structure of both the constraints and the objective. This work shows that greedy control scheduling is near-optimal when the constraints can be written as an intersection of matroids, algebraic structures that encode requirements such as limits on the number of agents deployed per time slot, total number of actuator uses, and duty cycle restrictions. To do so, it proves that the LQG cost function is $\alpha$-supermodular and provides a new $\alpha/(\alpha + P)$-optimality certificates for the greedy minimization of such functions over an intersection of $P$~matroids. These certificates are shown to approach the~$1/(1+P)$ guarantee of supermodular functions in relevant settings. These results support the use of greedy algorithms in non-supermodular quadratic control problems as opposed to typical heuristics such as convex relaxations and surrogate figures of merit, e.g., the~$\log\det$ of the controllability Gramian.

\end{abstract}

\section{INTRODUCTION} \label{S:intro}

As the number and complexity of interconnected devices grows, fully observing and/or actuating these dynamical systems becomes infeasible. Choosing when and where to sense and act then becomes a fundamental problem with significant impact on the system performance. Many of these problems can be cast as discrete optimization programs in which a subset of available elements~(sensors, tasks, or actuators) is chosen to optimize a control objective, such as quadratic estimation or control figures of merit~\cite{Gerkey04a, Shamaiah10g, Summers16o, Chamon20a}. Imposing constraints to meet cost, power, and/or communication requirements then leads to NP-hard problems whose solutions can only be approximated for anything beyond small-scale systems~\cite{Das11s, Sagnol13a, Krause08n, Ranieri14n, Tzoumas16m, Zhang17s, Olshevsky18o}.

In this work, we tackle the control scheduling problem in which agents or actuators are assigned to act at specific time instants so as to minimize a quadratic control cost. A common approach used in this context is convex relaxation, typically including a sparsity promoting regularization~\cite{Weimer08a, Joshi09s, Dhingra14a, Rusu18s}. Though practical, these methods lack approximation guarantees and may therefore result in solutions with poor performance. What is more, solving large-scale optimization problems can be challenging even when they are convex, especially when semidefinite relaxations are employed. Another avenue is to build the schedule using discrete optimization methods, such as tree pruning~\cite{Vitus12o} or greedy search~\cite{Jawaid15s, Singh17s, Jadbabaie18d}. The latter solution is particularly attractive in practice due to its low complexity and iterative nature: actuators or agents are matched to time instants one at a time by selecting the feasible match that most reduces the objective.

Another advantage of greedy methods is that they enjoy near-optimal guarantees. Two conditions are often used to derive these certificates. The first is supermodularity, a diminishing return property displayed by set functions such as the log determinant or the rank of the controllability Gramian~\cite{Summers16o, Tzoumas16m}. The second, is that the problem requirements can be mapped to a combinatorial structure known as a \emph{matroid}~\cite[Ch.~39]{Schrijver03c}. These include, for instance, constraints on the maximum number of actuators/agents selected per time instant. Under these assumptions, greedy scheduling yields~$1/2$-optimal solutions~\cite{Fisher78a}~(see Remark~\ref{R:near_optimal}). Algorithms based on continuous extensions can be used to improve this factor to~$1-1/e$~\cite{Calinescu11m}.

In practice, however, these assumptions are quite restrictive. First, quadratic control objectives used to obtain linear-quadratic-Gaussian~(LQG) or linear quadratic~(LQR) regulators are only supermodular under stringent conditions~\cite{Singh17s, Olshevsky18o}. Second, real-world applications typically have requirements too general to be mapped onto a single matroid. For instance, there are often restrictions on both the number of actuators per time instant and the total number of uses of each actuator, especially in power-starved systems. These issues are often side-stepped using supermodular surrogates such as the log determinant~\cite{Jawaid15s, Tzoumas16m}, although it is a poor substitute for quadratic costs in general~\cite[Remark~1]{Chamon20a}. Alternatively, guarantees have been obtained under laxer supermodularity conditions. They are, however, only available for the uniform matroid constraint found in selection problems~\cite{Das11s, Sviridenko14o, Chamon16n, Chamon17a, Bian17g, Summers19p, Chamon20a}.

In this paper, we set out to provide guarantees for greedy control scheduling. We start by describing the LQG/LQR scheduling problem and posing it as a constrained set function optimization program~(Section~\ref{S:problem}). We then proceed to describe condition under which this problem is tractable. We first show how greedy solutions can be obtained if the constraints are an intersection of matroids~(Section~\ref{S:matroid}) and derive near-optimal certificates for these solutions when the objective is approximately supermodular~(Sections~\ref{S:alphaSM} and~\ref{S:nearOptimal}). \blue{These results generalize those from~\cite{Chamon19m, Chamon20a} to the case of multiple, generic matroid constraints and provide stronger guarantees than those in~\cite{Chamon19m} in the case of a single matroid. However, the approximate supermodular bounds from~\cite{Chamon20a} do not hold directly for the LQG cost function.} We conclude by proving that it is nevertheless approximately supermodular, yielding explicit guarantees for the greedy control scheduling algorithm~(Section~\ref{S:alphaLQR}). We argue that these guarantees improve in scenarios of practical interest by illustrating their typical values in simulations and motivate the usefulness of this new result in an agent dispatch application~(Section~\ref{S:sims}).

\blue{\begin{remark}\label{R:near_optimal}
In the computational complexity literature, the term \emph{near-optimal} is used to refer to a solution that is within a non-zero factor of the optimal value of a mathematical program. Explicitly, let~$M^\star \geq 0$ denote the optimal value of a minimization problem~$M$. Then, $x$ is a~$\rho$-optimal solution~(or, equivalently, has an approximation factor of~$\rho$), $0 < \rho \leq 1$, if $M^\star \geq \rho M(x)$. An algorithm~$A$ that yields $\rho$-optimal solutions is also said to be $\rho$-optimal. Note that if~$\rho = 1$, then~$x$ is optimal and~$A$ \emph{solves} problem~$M$~\cite{Korte12c}. If, additionally, $A$ is polynomial-time~($A \in \text{P}$) and~$\rho$ is a constant factor, i.e., $\rho$ is independent of the problem instance, then~$M$ is said to be approximable~($M \in \text{APX}$). Observe that while this work shows that greedy control scheduling is near-optimal by bounding its approximation factor~$\rho$~(Corollary~\ref{T:guarantee}), it does not prove that the control scheduling problem is in APX~(since~$\rho$ depends on system parameters). In fact, it is likely not since its dual problem, sensor selection for Kalman filtering, does not admit a constant factor approximation unless~$\text{P} = \text{NP}$~\cite{Ye18c}.
\end{remark}}

\section{PROBLEM FORMULATION}
	\label{S:problem}

Consider a discrete-time, linear dynamical system and let~$\calV_k$ denote the \blue{finite} set of inputs available at time~$k$ as illustrated in Figure~\ref{F:sets}. Depending on the context, these abstract inputs represent actuators, agents, or both. Suppose that at each time instant, only a subset~$\calS_k \subseteq \calV_k$ of inputs is used, so that the states~$\bx_k \in \setR^n$ of the system evolve according to
\begin{equation}\label{E:dynamics}
	\bx_{k+1} = \bA_k \bx_k + \sum_{i \in \calS_k} \bb_{i,k} u_{i,k} + \bw_k
		\text{,}
\end{equation}
where, for each time~$k$, $\bA_k$ denotes the state transition matrix, $\bb_{i,k} \in \setR^n$ is a vector representing the effect of applying the control action~$u_{i,k}$ to the~$i$-th input, and~$\bw_k$ is a zero-mean Gaussian vector that models the process noise. We assume that~$\{\bw_j,\bw_k\}$ are independent for~$j \neq k$ and that their covariance matrices~$\E \bw_k \bw_k^T = \bW_k$ are either positive definite~($\bW_k \succ 0$) for all~$k$ or zero~($\bW_k = \bzero$) for all~$k$, which accounts for the deterministic dynamics case.

\blue{A control schedule is obtained by combining the sequence of active inputs~$\calS_k \subseteq \calV_k$ at each instant~$k$. In order to formally account for complex constraints, we will assume without loss of generality that~$\calV_j \cap \calV_k = \emptyset$ for all~$j \neq k$. Any input~$v$ available at more than a single time instant can be represented by unique copies, e.g., as~$v^j \in \calV_j$ and~$v^k \in \calV_k$. We then let~$\calVbar = \calV_0 \cup \calV_1 \cup \dots \cup \calV_{N-1}$ be the set of all actuators available over the~$N$-steps time window~$[0, N-1]$ and define a \emph{schedule} as a single set~$\calS \in \calVbar$. The set of active inputs at instant~$k$ is then recovered as~$\calS_k = \calS \cap \calV_k$. These definitions are illustrated in Figure~\ref{F:sets}.}

Given a schedule~$\calS \subseteq \calVbar$, designing the control actions~$u_{i,k}$ reduces to a classical optimal control problem, since~\eqref{E:dynamics} describes a well-defined, time-varying dynamical system. In particular, we consider the linear-quadratic-Gaussian~(LQG) control problem
\begin{multline} \label{E:LQG}
	V^\star(\calS) = \min_{\calU(\calS)}\ \E \Bigg[
		\sum_{k=0}^{N-1} \left( \bx_k^T \bQ_k \bx_k
			+ \sum_{i \in \calS \cap \calV_k} r_{i,k} u_{i,k}^2 \right)
	\\
	{}+ \bx_N^T \bQ_N \bx_N \Bigg]
		\text{,}
\end{multline}
where~$\calU(\calS) = \{u_{i,k} \mid i \in \calS \cap \calV_k,\ k=0,\dots,N-1\}$ is the set of valid control actions, $\bQ_k \succ 0$ for all~$k$ are the state weights, and~$r_{i,k} > 0$ for all~$i$ and~$k$ are the input weights. The relative values of these weights describe the trade-off between state regulation~($\bQ_k$) and input cost~($r_{i,k}$). The expectation in~\eqref{E:LQG} is taken with respect to the process noise sequence~$\{\bw_k\}$ and the initial state~$\bx_0$, assumed to be a Gaussian random variable with mean~$\bxb_0$ and covariance~$\bSigma_0 \succ 0$. When~$\bw_k = \bzero$ for all~$k$, \eqref{E:LQG} reduces to the linear quadratic regulator~(LQR) problem. It is useful to recall that~\eqref{E:LQG} has a closed-form solution that we describe in the following proposition.

\begin{proposition}\label{T:LQG}
	Given a schedule~$\calS \subseteq \calVbar$, the optimal value~$V^\star(\calS)$ of the LQG problem in~\eqref{E:LQG} can be written as
	\begin{equation}\label{E:optimalCost}
		V^\star(\calS) = \trace\left[ \bSigma_0 \bP_0(\calS) \right]
			+ \sum_{k = 0}^{N-1} \trace\left[ \bW_{k} \bP_{k+1}(\calS) \right]
			\text{,}
	\end{equation}
	where the~$\bP_k(\calS)$ are obtained via the backward recursion
	\begin{equation}\label{E:P}
		\bP_k(\calS) = \bQ_k
			+ \bA_{k}^T \left( \bP_{k+1}^{-1}(\calS)
			+ \sum_{i \in \calS \cap \calV_k} r_{i,k}^{-1} \bb_{i,k}^{} \bb_{i,k}^T \right)^{-1} \bA_{k}
			\text{,}
	\end{equation}
	starting with~$\bP_N = \bQ_N$.
\end{proposition}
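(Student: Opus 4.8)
The plan is to prove the closed form by dynamic programming on the cost-to-go, in the classical finite-horizon LQG style, and then to recast the resulting Riccati recursion into the stated form~\eqref{E:P} by means of the matrix inversion lemma. Because $\calV_j \cap \calV_k = \emptyset$, the active inputs at stage $k$ are exactly $\calS_k = \calS \cap \calV_k$, so~\eqref{E:dynamics} is a genuine time-varying linear system. Collecting the active gains $\bb_{i,k}$, $i \in \calS_k$, as the columns of a matrix $\bB_k$, stacking the corresponding controls into $\bu_k$, and writing $\bR_k = \diag(r_{i,k})$, the objective~\eqref{E:LQG} becomes a standard quadratic-cost program with $\bx_{k+1} = \bA_k\bx_k + \bB_k\bu_k + \bw_k$ and per-stage cost $\bx_k^T\bQ_k\bx_k + \bu_k^T\bR_k\bu_k$.

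First I would introduce the optimal cost-to-go $J_k(\bx)$, the minimal expected cost accrued from stage $k$ to $N$ given $\bx_k = \bx$, and posit the ansatz $J_k(\bx) = \bx^T\bP_k\bx + c_k$. The terminal cost fixes the base case $J_N(\bx) = \bx^T\bQ_N\bx$, i.e. $\bP_N = \bQ_N$ and $c_N = 0$. For the inductive step I would write the Bellman equation
\[
  J_k(\bx) = \bx^T\bQ_k\bx + \min_{\bu_k}\Big[\bu_k^T\bR_k\bu_k + \E_{\bw_k}J_{k+1}(\bA_k\bx + \bB_k\bu_k + \bw_k)\Big],
\]
substitute the quadratic ansatz for $J_{k+1}$, and minimize the resulting convex quadratic in $\bu_k$ by completing the square. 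This reproduces a quadratic $J_k$ and yields the standard Riccati update $\bP_k = \bQ_k + \bA_k^T\big(\bP_{k+1} - \bP_{k+1}\bB_k(\bR_k + \bB_k^T\bP_{k+1}\bB_k)^{-1}\bB_k^T\bP_{k+1}\big)\bA_k$; meanwhile the independence and zero mean of $\bw_k$ contribute the increment $\E[\bw_k^T\bP_{k+1}\bw_k] = \trace[\bW_k\bP_{k+1}]$ to the constant, so that $c_k = c_{k+1} + \trace[\bW_k\bP_{k+1}]$.

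To reach~\eqref{E:P} exactly, I would apply the matrix inversion lemma to the bracketed term, obtaining $\bP_{k+1} - \bP_{k+1}\bB_k(\bR_k + \bB_k^T\bP_{k+1}\bB_k)^{-1}\bB_k^T\bP_{k+1} = (\bP_{k+1}^{-1} + \bB_k\bR_k^{-1}\bB_k^T)^{-1}$, and then note that $\bB_k\bR_k^{-1}\bB_k^T = \sum_{i \in \calS\cap\calV_k} r_{i,k}^{-1}\bb_{i,k}\bb_{i,k}^T$, which is precisely the inner matrix in~\eqref{E:P}. All inverses are well-defined because $\bQ_k \succ 0$ forces $\bP_k \succeq \bQ_k \succ 0$ for every $k$ by backward induction, so $\bP_{k+1}^{-1}$ exists and the middle factor is positive definite. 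Unrolling the constant gives $c_0 = \sum_{k=0}^{N-1}\trace[\bW_k\bP_{k+1}]$, and taking the expectation of $J_0$ over the initial state yields $\E[\bx_0^T\bP_0\bx_0] + c_0 = \trace[\bSigma_0\bP_0] + \bxb_0^T\bP_0\bxb_0 + \sum_{k=0}^{N-1}\trace[\bW_k\bP_{k+1}]$, which matches~\eqref{E:optimalCost} for a zero-mean initial state $\bxb_0 = \bzero$ (the deterministic case $\bW_k = \bzero$ is covered automatically, as then every $c_k$ vanishes).

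The main obstacle I anticipate is the bookkeeping of the inductive step: completing the square in $\bu_k$ while carefully tracking the cross terms $\bx^T\bA_k^T\bP_{k+1}\bB_k\bu_k$, so that the optimal feedback gain and the Riccati quadratic emerge cleanly and the ansatz is exactly reproduced at each stage. The matrix inversion lemma step is short but must be applied with the correct identification of $\bR_k$ and $\bB_k$, and the positive-definiteness induction that guarantees both inverses in~\eqref{E:P} exist is the other point that warrants care.
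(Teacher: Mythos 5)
Your proposal is correct and follows essentially the same route as the paper's appendix proof: backward dynamic programming with the quadratic ansatz $V_j = \bx_j^T\bP_j\bx_j + q_j$, completion of the square in the stacked control $\bu_k$, and the matrix inversion lemma to convert the standard Riccati update into the form~\eqref{E:P}. Your remark about the extra term $\bxb_0^T\bP_0(\calS)\bxb_0$ is a fair catch --- the paper silently uses $\E[\bx_0\bx_0^T] = \bSigma_0$, i.e.\ treats $\bSigma_0$ as the second moment of the initial state --- but this does not change the substance of the argument.
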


\begin{proof}
This result follows directly from the classical dynamic programming argument for the LQG by taking into account only the active inputs in~$\calS$~(e.g., see~\cite{Bertsekas17d}). For ease of reference, we provide a derivation in the appendix.
\end{proof}

\begin{figure}[tb]
	\centering
	\includesvg[width=\columnwidth]{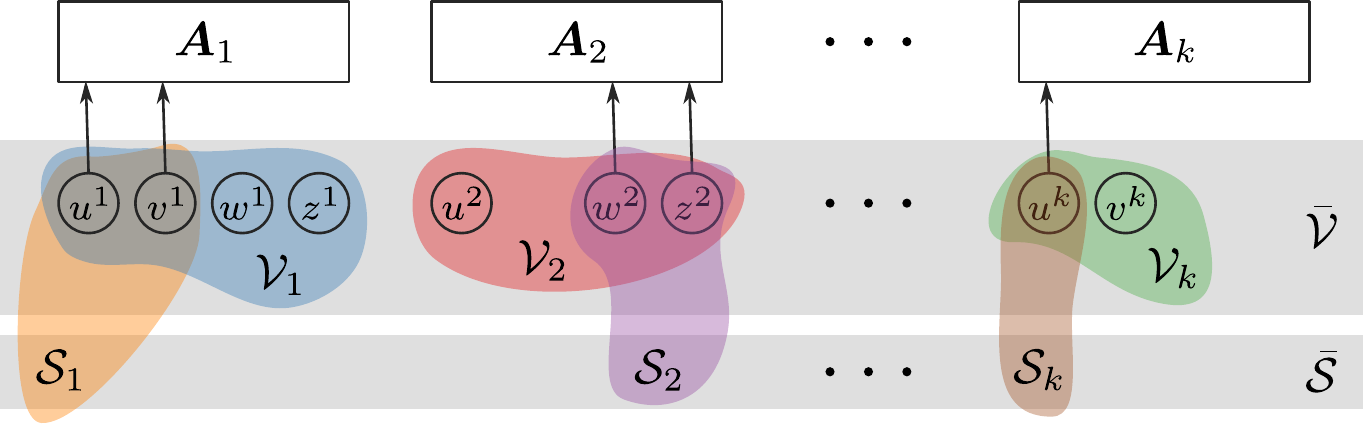}
	\caption{Illustration of time-specific and overall input sets and schedules.}
	\label{F:sets}
\end{figure}

\emph{Control scheduling} refers to the problem of finding a schedule~$\calS$ that minimizes the control cost in~\eqref{E:LQG} subject to time-input constraints, i.e.,
\begin{prob}\label{P:actuatorScheduling}
	\minimize_{\calS \subseteq \calVbar}&
		&&J(\calS) \triangleq V^\star(\calS) - V^\star(\emptyset)
	\\
	\subjectto& &&\calS \in \calI_p
		\text{,} \quad p = 1,\dots,P
		\text{,}
\end{prob}
where~$\calI_p \subseteq 2^{\calVbar}$ are families of subsets of~$\calVbar$ that enumerate admissible schedules and~$2^{\calX}$ denotes the power set of~$\calX$~(the collection of all its finite subsets). Typical scheduling requirements include (i)~limits on the total number of control actions, (ii)~limits on the number of inputs used per time instant, (iii)~restrictions on the consecutive use of inputs, and combinations thereof. Observe that the constant~$V^\star(\emptyset)$ in the objective of~\eqref{P:actuatorScheduling} does not affect the solution of the optimization problem. It is used so that~$J(\emptyset) = 0$, which simplifies the presentation of our approximation certificates.

Problem~\eqref{P:actuatorScheduling}, however, is NP-hard even for constraints as simple as a budget on the total number of control actions~($p = 1$ and~$\calI_1 = \{\calS \subseteq \calVbar \mid \abs{\calS} \leq s\}$)~\cite{Krause08n, Das11s, Sagnol13a, Ranieri14n, Tzoumas16m, Zhang17s, Olshevsky18o}. In the following section, we therefore examine an abstract version of~\eqref{P:actuatorScheduling} to determine when it is possible to efficiently obtain approximate solutions for it, focusing on greedy methods. We will show that when the~$\calI_p$ are independent sets of matroids~(Section~\ref{S:matroid})---algebraic structures that generalize the notions of linear independence in vector spaces---and the cost function is $\alpha$-supermodular~(Section~\ref{S:alphaSM})---a relaxed diminishing return property of set functions---, greedy methods yield~$[\alpha/(\alpha + P)]$-optimal schedules~(Theorem~\ref{T:approx}). We then proceed to bound~$\alpha$ for the cost~$J$~(Section~\ref{S:alphaLQR}) and give explicit guarantees for greedy solutions of~\eqref{P:actuatorScheduling} in terms of known parameters of the problem~(Corollary~\ref{T:guarantee}).

\section{WHEN IS CONTROL SCHEDULING (APPROXIMATELY) TRACTABLE?}
\label{S:tractability}

The complexity of~\eqref{P:actuatorScheduling} is tightly related to the anatomy of its constraints and objective. Indeed, even obtaining a feasible schedules for~\eqref{P:actuatorScheduling} can be hard depending on the structure of the~$\calI_p$. Not to mention obtaining a good one. In fact, \eqref{P:actuatorScheduling} is generally NP-hard~\cite{Krause08n, Das11s, Sagnol13a, Ranieri14n, Tzoumas16m, Zhang17s, Olshevsky18o}. Nevertheless, depending on the nature of the objective and constraints, there exist simple algorithms able to provide approximate solutions. To determine when and how this is possible, we study the generic set function optimization problem
\begin{prob}\label{P:generic}
	\calX^\star \in \argmin_{\calX \subseteq \calE}&
		&&f(\calX)
	\\
	\subjectto& &&\calX \in \calI
		\text{,}
\end{prob}
of which~\eqref{P:actuatorScheduling} is an instance. To see this is the case, let~$\calE = \calVbar$, $f(\calX) = J(\calX)$, and~$\calI = \bigcap_{p = 1}^P \calI_p$. In what follows, we introduce conditions on the constraints~(Section~\ref{S:matroid}) and objective~(Section~\ref{S:alphaSM}) of~\eqref{P:generic} such that a simple algorithm~(greedy search) can be used to generate feasible solutions that we then prove are near-optimal~(Section~\ref{S:nearOptimal}). We then proceed to show that~\eqref{P:actuatorScheduling} satisfies these conditions and derive explicit certificates for greedy control scheduling.

\subsection{Matroids}
\label{S:matroid}

We begin by discussing a constraint structure for which feasible schedules can be easily constructed. Explicitly, we restrict~$\calI$ to be an intersection of independent sets of matroids \blue{defined over the same ground set}. Matroids are entities that extend the notion of linear independence in vector spaces to arbitrary algebraic structures. Formally,

\begin{definition}\label{D:Matroid}
	A \emph{matroid}~$M = \left(\calE, \calI\right)$ consists of a finite set of elements~$\calE$ and a family~$\calI \subseteq 2^\calE$ of subsets of~$\calE$ called \emph{independent sets} that satisfy:
	\begin{enumerate}[P.1]
	\item $\emptyset \in \calI$;

	\item if $\calA \subseteq \calB$ and~$\calB \in \calI$, then $\calA \in \calI$;

	\item if~$\calA,\calB \in \calI$ and~$\abs{\calA} < \abs{\calB}$, then there exists~$e \in \calB \setminus \calA$ such that~$\calA \cup \{e\} \in \calI$.
	\end{enumerate}
\end{definition}

\noindent Hence, we define the collection of valid schedules~$\calI$ in~\eqref{P:generic} to be of the form
\begin{equation}\label{E:matroidIntersection}
	\calI = \bigcap_{p = 1}^P \calI_p
		\text{, such that } (\calE,\calI_p) \text{ is a matroid for all $p$.}
\end{equation}
As we discuss in Section~\ref{S:alphaLQR}, typical scheduling constraints~(in particular those listed in Section~\ref{S:problem}) can be described in terms of independent sets and their intersection. It is worth noting that matroids are not closed under intersections, so that~$\calI$ in~\eqref{E:matroidIntersection} need not be a matroid~\cite[Ch.~39]{Schrijver03c}.

The use of matroid intersections as constraints in~\eqref{P:generic} is attractive for two reasons. The first is a direct consequence of the inheritance property of matroids, i.e., P.1 and~P.2 in Definition~\ref{D:Matroid}.

\begin{proposition}\label{T:inheritanceIntersection}
For each~$\calA \in \calI$ with~$\calI$ as in~\eqref{E:matroidIntersection}, there exists a chain~$\emptyset = \calA_0 \subset \calA_1 \subset \dots \subset \calA_T = \calA$ such that~$\calA_t \in \calI$ for all~$t$. In particular, there exists a chain with~$T = \abs{\calA}$.
\end{proposition}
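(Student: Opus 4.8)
The plan is to exploit the hereditary property P.2, which holds in each matroid $(\calE,\calI_p)$ separately and therefore transfers to the intersection $\calI$. The key observation is that, by the definition in~\eqref{E:matroidIntersection}, membership $\calA \in \calI$ is equivalent to $\calA \in \calI_p$ for every $p = 1,\dots,P$. Hence any subset of $\calA$ inherits independence in each of the $P$ matroids and, consequently, lies in $\calI$ as well.

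Concretely, I would first enumerate the elements of $\calA$ in an arbitrary order, writing $\calA = \{a_1,\dots,a_T\}$ with $T = \abs{\calA}$, and then define the nested prefixes $\calA_t = \{a_1,\dots,a_t\}$ for $t = 0,1,\dots,T$, under the convention $\calA_0 = \emptyset$. By construction these satisfy $\emptyset = \calA_0 \subset \calA_1 \subset \dots \subset \calA_T = \calA$, with each inclusion proper because exactly one new element is introduced at each step; this immediately furnishes a chain of the claimed length $T = \abs{\calA}$.

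It then remains to verify that every $\calA_t$ belongs to $\calI$. Fixing any $t$ and any $p$, since $\calA_t \subseteq \calA$ and $\calA \in \calI_p$, property~P.2 applied in the $p$-th matroid gives $\calA_t \in \calI_p$. As $p$ is arbitrary, $\calA_t \in \bigcap_{p=1}^P \calI_p = \calI$, which completes the construction. There is essentially no obstacle here: the result is a direct corollary of the downward closure of matroid independence. The only subtlety worth flagging is that this closure must hold \emph{simultaneously} across all $P$ matroids, which is guaranteed precisely because each factor $(\calE,\calI_p)$ satisfies~P.2 on its own, so that the intersection remains hereditary even though it need not itself be a matroid.
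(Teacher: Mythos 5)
Your proof is correct and is exactly the argument the paper intends: the proposition is presented as a direct consequence of the inheritance property P.2 applied in each matroid separately, and your prefix-chain construction makes that explicit. Nothing is missing.
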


\noindent Hence, every feasible schedule in~\eqref{P:generic} can be constructed element-by-element. In particular, so can any optimal solution~$\calX^\star$. This suggests an ``interior point''-type algorithm that greedily minimizes the objective~$f$ at each step while maintaining feasibility. Explicitly, a greedy solution of~\eqref{P:generic} is constructed by taking~$\calG_0 = \emptyset$ and incorporating elements of~$\calE$ one at a time, so that at step~$t$,
\begin{equation}\label{E:greedy}
	\calG_{t+1} = \calG_{t} \cup \{g_t\}
\end{equation}
where
\begin{equation}\label{E:greedyElement}
\begin{aligned}
	g_t \in \argmin_{e \in \calE \setminus \calG_{t}}&
		&&f\left( \calG_{t} \cup \{e\} \right)
	\\
	\subjectto& &&\calG_{t} \cup \{e\} \in \calI
		\text{.}
\end{aligned}
\end{equation}
The algorithm stops once no element can be added to~$\calG_t$ without violating feasibility, i.e., when the~$\argmin$ set in~\eqref{E:greedyElement} is empty. Denote that final iteration by~$T$. Naturally, $T \leq \abs{\calE}$ and the algorithm does terminate. What is more, note from~\eqref{E:greedy} that not only is~$\calG_T \in \calI$ by construction, but due to Proposition~\ref{T:inheritanceIntersection}, every set~$\calA \in \calI$ can be constructed by~\eqref{E:greedy} for an appropriate~$f$~(e.g., $f(\calX) = \abs{\calX \cap \calA}$). In other words, the greedy algorithm does not prune any solution from the feasibility set of~\eqref{P:generic}.

The second reason for using matroid intersections is related to the exchange property~(P.3 in Definition~\ref{D:Matroid}) and is laid out in the following proposition~\cite[Sec.~41.1a]{Schrijver03c}:

\begin{proposition}\label{T:exchangeIntersection}
Let~$\calA, \calB \in \calI$ with~$\calI$ as in~\eqref{E:matroidIntersection}. If~$\abs{\calB} > P \abs{\calA}$, then there exist at least~$\abs{\calB} - P \abs{\calA}$ elements~$b$ of~$\calB \setminus \calA$ such that~$\calA \cup \{b\} \in \calI$.
\end{proposition}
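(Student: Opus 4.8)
The plan is to reduce the claim to a counting argument inside a single matroid and then combine the $P$ matroids by a union bound. Call an element $b \in \calB \setminus \calA$ \emph{feasible for $M_p = (\calE,\calI_p)$} if $\calA \cup \{b\} \in \calI_p$, and \emph{feasible} if $\calA \cup \{b\} \in \calI$; since $\calI = \bigcap_{p=1}^P \calI_p$ as in~\eqref{E:matroidIntersection}, an element is feasible exactly when it is feasible for every $M_p$. The target is therefore a lower bound on the number of elements of $\calB \setminus \calA$ that are simultaneously feasible for all~$p$.

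First I would establish the single-matroid count: for each fixed $p$, at least $\abs{\calB} - \abs{\calA}$ elements of $\calB \setminus \calA$ are feasible for $M_p$. To see this, I would grow $\calA$ inside $M_p$ using the exchange property P.3. Initialize $\calA' = \calA$; while $\abs{\calA'} < \abs{\calB}$, both $\calA',\calB \in \calI_p$, so P.3 yields some $e \in \calB \setminus \calA'$ with $\calA' \cup \{e\} \in \calI_p$, and we add $e$ to $\calA'$. Only elements of $\calB$ are ever added, so the invariant $\calA \subseteq \calA' \subseteq \calA \cup \calB$ is maintained, and the loop terminates with $\abs{\calA'} = \abs{\calB}$ after adding exactly $\abs{\calB} - \abs{\calA}$ distinct elements of $\calB \setminus \calA$. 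The crucial point is that each added $e$ is feasible for $M_p$ \emph{with respect to $\calA$ itself}: at the moment $e$ is chosen we have $\calA \subseteq \calA'$, hence $\calA \cup \{e\} \subseteq \calA' \cup \{e\} \in \calI_p$, and the hereditary property P.2 gives $\calA \cup \{e\} \in \calI_p$. This produces $\abs{\calB} - \abs{\calA}$ elements feasible for $M_p$, so the number of elements of $\calB \setminus \calA$ that are \emph{infeasible} for $M_p$ is at most $\abs{\calB \setminus \calA} - (\abs{\calB} - \abs{\calA}) = \abs{\calA} - \abs{\calA \cap \calB}$.

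I would then union-bound over the $P$ matroids. An element of $\calB \setminus \calA$ fails to be feasible only if it is infeasible for at least one $M_p$, so the number of infeasible elements is at most $\sum_{p=1}^P \left( \abs{\calA} - \abs{\calA \cap \calB} \right) = P\left(\abs{\calA} - \abs{\calA \cap \calB}\right)$. Subtracting this from $\abs{\calB \setminus \calA} = \abs{\calB} - \abs{\calA \cap \calB}$ leaves at least $\abs{\calB} - P\abs{\calA} + (P-1)\abs{\calA \cap \calB} \geq \abs{\calB} - P\abs{\calA}$ feasible elements, which is the claimed bound (strictly positive exactly when $\abs{\calB} > P\abs{\calA}$).

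The loop bookkeeping and the closing arithmetic are routine; the one step that needs care is the single-matroid count. It is tempting to assert directly that $\abs{\calB} - \abs{\calA}$ elements augment $\calA$, but P.3 only augments the \emph{current} set $\calA'$, not $\calA$. The essential move is to invoke P.2 to transfer feasibility from $\calA'$ back to $\calA$, and to carry the overlap term $\abs{\calA \cap \calB}$ through the estimate so that the union bound collapses to exactly $\abs{\calB} - P\abs{\calA}$ rather than the weaker $\abs{\calB} - \abs{\calA \cap \calB} - P\abs{\calA}$ one would get from the cruder per-matroid bound $\abs{\calA}$.
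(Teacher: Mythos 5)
Your proof is correct. Where it differs from the paper's is in how the $P$ matroids are combined: the paper runs an induction over the matroids, at each stage applying P.3 (together with P.2, to certify that the surviving candidate set is still independent in the next matroid) to prune the set of admissible elements, losing at most $\abs{\calA}$ candidates per matroid; you instead establish the single-matroid count once — at least $\abs{\calB}-\abs{\calA}$ elements of $\calB\setminus\calA$ augment $\calA$ in each $M_p$, via the grow-$\calA'$-and-pull-back-with-P.2 argument, which is sound — and then take a union bound over the per-matroid sets of infeasible elements. The two routes lean on exactly the same two axioms and the same single-matroid exchange count, so neither is more elementary, but your union bound is arguably cleaner to verify (no nested induction hypothesis to track) and, as you note, it retains the overlap term to give the marginally sharper count $\abs{\calB}-P\abs{\calA}+(P-1)\abs{\calA\cap\calB}$, whereas the paper's sequential pruning discards that refinement. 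For the purposes of Theorem~\ref{T:approx} only the weaker bound $\abs{\calB}-P\abs{\calA}$ is used, so the improvement buys nothing downstream, but it costs nothing either. Your closing caution about why P.2 is needed to transfer feasibility from $\calA'$ back to $\calA$ is exactly the right point to flag; the paper's base case sidesteps it by shrinking $\calB$ rather than growing $\calA$, which is an equivalent way of keeping $\calA$ fixed as the augmented set.
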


\begin{proof}
The proof proceed by induction over the matroids in~$\calI$. The base case for the first matroid~($p = 1$) is readily obtained from P.3 in Definition~\ref{D:Matroid}. Indeed, take~$e \in \calB \setminus \calA$ such that~$\calA \cup \{e\} \in \calI_1$ and let~$\calB^\prime = \calB \setminus \{e\}$. This pruning process can be repeated as long as~$\abs{\calB^\prime} > \abs{\calA}$, i.e., at least~$\abs{\calB} - \abs{\calA}$ times.

Now, suppose the claim holds for the first~$P^\prime-1 < P$ independent sets, i.e., there exists a set~$\calC \subseteq \calB \setminus \calA$ such that
\begin{equation}\label{E:exchangeInduction}
\begin{gathered}
	\abs{\calC} > \abs{\calB} - (P^\prime - 1)\abs{\calA}
	\text{ and }
	\\
	\calA \cup \{c\} \in \bigcap_{p = 1}^{P^\prime-1} \calI_p
	\text{ for all~$c \in \calC$.}
\end{gathered}
\end{equation}
Notice that~$\calB \in \calI \Rightarrow \calB \in \bigcap_{p = 1}^{P^\prime} \calI_p$ and since~$\calC \subset \calB$, the inheritance property of matroids~(P.2 in Definition~\ref{D:Matroid}) implies that~$\calC \in \bigcap_{p = 1}^{P^\prime} \calI_p$ as well. In particular, since~$\calC \in \calI_{P^\prime}$, we again obtain from P.3 in Definition~\ref{D:Matroid} that there exist~$\calC^\prime \subseteq \calC \setminus \calA$ such that
\begin{equation}\label{E:exchangeStep}
	\abs{\calC^\prime} > \abs{\calC} - \abs{\calA}
	\text{ and }
	\calA \cup \{c\} \in \calI_{P^\prime}
		\text{ for all~$c \in \calC^\prime$.}
\end{equation}
Together, \eqref{E:exchangeInduction} and~\eqref{E:exchangeStep} yield that~$\abs{\calC^\prime} > \abs{\calB} - P^\prime\abs{\calA}$ and that~$\calA \cup \{c\} \in \bigcap_{p = 1}^{P^\prime-1} \calI_p \cap \calI_{P^\prime}$ for all~$c \in \calC^\prime$.
\end{proof}

\noindent Though more abstract, this property is fundamental to obtain approximation certificates for the procedure in~\eqref{E:greedy}. Indeed, the following noteworthy corollary is obtained for~$\calB = \calX^\star$ and~$\calA = \calG_t$:

\begin{corollary}
If~$\abs{\calX^\star} > P t$, then there exist at least~$\abs{\calX^\star} - Pt$ elements~$x \in \calX^\star$ such that~$\calG_t \cup \{x\} \in \calI$. Hence, it must be that the algorithm in~\eqref{E:greedy} terminates only after~$T \geq \abs{\calX^\star}/P$.
\end{corollary}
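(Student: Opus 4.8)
The plan is to obtain this corollary as an almost immediate consequence of Proposition~\ref{T:exchangeIntersection}. First I would record the key structural fact about the greedy iterates: by the update rule~\eqref{E:greedy}, exactly one element is added to~$\calG_t$ at each step, so~$\abs{\calG_t} = t$, and each~$\calG_t \in \calI$ by construction. Since the optimizer~$\calX^\star$ is feasible for~\eqref{P:generic}, we also have~$\calX^\star \in \calI$. Thus both sets are independent in the intersection~\eqref{E:matroidIntersection}, which is precisely the hypothesis needed to invoke the exchange property.

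Next I would apply Proposition~\ref{T:exchangeIntersection} verbatim with~$\calA = \calG_t$ and~$\calB = \calX^\star$. The condition~$\abs{\calB} > P \abs{\calA}$ becomes~$\abs{\calX^\star} > P t$, exactly the hypothesis of the corollary, and the conclusion yields at least~$\abs{\calX^\star} - Pt$ elements~$x \in \calX^\star \setminus \calG_t$ with~$\calG_t \cup \{x\} \in \calI$. This establishes the first assertion directly; the only point worth checking is that these elements lie in~$\calX^\star \setminus \calG_t \subseteq \calE \setminus \calG_t$, so that they are in fact admissible candidates for the greedy augmentation in~\eqref{E:greedyElement}.

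For the termination bound I would argue by contraposition using the stopping criterion. The algorithm halts at iteration~$T$ exactly when the feasible set in~\eqref{E:greedyElement} is empty, i.e., when no~$e \in \calE \setminus \calG_T$ satisfies~$\calG_T \cup \{e\} \in \calI$. Suppose, for contradiction, that~$T < \abs{\calX^\star}/P$, that is, $\abs{\calX^\star} > P T$. The first part of the corollary, applied at~$t = T$, then guarantees at least~$\abs{\calX^\star} - P T \geq 1$ elements of~$\calX^\star \setminus \calG_T$ that can be added while preserving feasibility, contradicting the fact that the greedy procedure has stopped. Hence~$\abs{\calX^\star} \leq P T$, i.e., $T \geq \abs{\calX^\star}/P$.

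Since every step reduces to substitution into an already-proved proposition, I do not anticipate a genuine obstacle; the only care required is bookkeeping---confirming~$\abs{\calG_t} = t$, that the exchanged elements are valid greedy candidates lying outside~$\calG_t$, and that the strict inequality~$\abs{\calX^\star} > P T$ at a supposed premature termination forces the existence of at least one admissible augmenting element.
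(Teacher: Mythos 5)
Your proof is correct and follows essentially the same route as the paper: the first claim is the direct substitution $\calA = \calG_t$, $\calB = \calX^\star$ into Proposition~\ref{T:exchangeIntersection} (using $\abs{\calG_t} = t$ and the feasibility of both sets), and the termination bound is the same contradiction argument the paper sketches---premature stopping with $\abs{\calX^\star} > PT$ would leave at least one admissible augmenting element. No gaps; the bookkeeping you flag is exactly what needs checking and you check it.
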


\blue{Note that if~$f$ decreases as the number of elements in~$\calX$ grows, but it does so at a diminishing pace, then we could derive approximation guarantees based on the fact that~\eqref{E:greedy} does not terminate too early. That is where the lower bound on~$T$ is useful. It stems directly from the fact that, if~\eqref{E:greedy} stops and returns a set with~$\abs{\calG_T} < \abs{\calX^\star}/P$, Proposition~\ref{T:exchangeIntersection} implies there exists at least one element~$x \in \calX^\star$ such that~$\calG_T \cup \{x\} \in \calI$. This contradicts the fact that the algorithm terminated.}

Naturally, near-optimality depends not only on the feasibility set~$\calI$, but also on the objective~$f$. For instance, if~$f$ is a monotone decreasing modular function, then~$\calG_T$ is an optimal solution of~\eqref{P:generic} with~$P = 1$ in~\eqref{E:matroidIntersection}~\cite[Ch.~40]{Schrijver03c}; if~$f$ is a monotone decreasing supermodular function, then~$\calG_T$ would be~$1/(1+P)$-optimal~\cite{Fisher78a}. It is well-known, however, that the cost function~$J$ of the original control scheduling problem~\eqref{P:actuatorScheduling} is not modular and is supermodular only under strict conditions~\cite{Olshevsky18o, Tzoumas16m, Singh17s}. We therefore consider a broader class of objective functions~$f$ known as~$\alpha$-supermodular.

\subsection{$\alpha$-supermodularity}
\label{S:alphaSM}

\emph{Supermodularity}~(\emph{submodularity}) refers a ``diminishing returns'' property of certain set functions that yields approximation certificates for their greedy minimization~(maximization). It holds, for instance, for the rank or~$\log\det$ of the controllability~(observability) Gramian of underactuated~(underobserved) systems as well as the Shannon entropy and mutual information of a set of random variables~\cite{Krause14s, Bach13l}. \blue{Note that by \emph{diminishing returns} we mean that the set function decreases~(increases) ever more slowly as the number of elements in the set grows. In that sense, these functions are akin to convex~(concave) functions.}

Supermodularity, however, is a stringent condition that, in particular, does not hold for the LQG objective of the control scheduling problem~\eqref{P:actuatorScheduling}~\cite{Olshevsky18o, Tzoumas16m, Ranieri14n}. To account for this, different forms of \emph{approximate supermodularity}~(\emph{submodularity}) have been proposed by allowing controlled violations of the original ``diminishing returns'' property. The rationale is that if a function is close to supermodular, then it should behave similarly to a truly supermodular function. We formalize these statements in the sequel using the concept of~$\alpha$-supermodularity~\cite{Chamon16n, Chamon17a}.

Let~$f: 2^\calE \to \setR$ be a function defined over subsets~$\calX \subseteq \calE$. We say~$f$ is \emph{normalized} if~$f(\emptyset) = 0$ and~$f$ is \emph{monotone decreasing} if for all sets~$\calA \subseteq \calB \subseteq \calE$ it holds that~$f(\calA) \geq f(\calB)$. Note that if a function is normalized and monotone decreasing, then it is non-positive valued, i.e., $f(\calX) \leq 0$ for all~$\calX \subseteq \calE$. Define the incremental variation
\begin{equation}\label{E:delta}
	\Delta_u f(\calX) = f\left( \calX \right)
		- f\left( \calX \cup \{u\} \right)
\end{equation}
to be the change in the value of~$f$ from adding~$u \in \calE$ to the set~$\calX$. Observe that~$u \in \calX$ implies that~$\Delta_u f(\calX) = 0$. Then,

\begin{definition}\label{D:supermodularity}
A set function~$f: 2^\calE \to \setR$ is \emph{supermodular} if
\begin{equation}\label{E:supermodularity}
	\Delta_u f(\calA) \geq \Delta_u f(\calB)
\end{equation}
for all sets~$\calA \subset \calB \subseteq \calE$ and elements~$u \in \calE \setminus \calB$. A function~$f$ is \emph{submodular} if~$-f$ is supermodular.
\end{definition}

Supermodular, monotone decreasing functions play an important role in the context of scheduling problems due to the celebrated near-optimal guarantees for their greedy minimization. For instance, if~$\calI$ is a single uniform matroid, i.e., $\calI = \{\calS \subseteq \calVbar \mid \abs{\calS} \leq s\}$~(cardinality constraint), then~$\calG_T$ is~$(1-1/e)$-optimal~\cite{Nemhauser78a}. In the general case of~$\calI$ as in~\eqref{E:matroidIntersection}, it holds that~\cite{Fisher78a}
\begin{equation}\label{E:greedySubmodularMatroids}
	f(\calG_T) \leq \frac{1}{1+P} f(\calX^\star)
		\text{.}
\end{equation}
Note that the inequality in~\eqref{E:greedySubmodularMatroids} is reversed compared to Remark~\ref{R:near_optimal} because~$f(\calX) \leq 0$ for all~$\calX$. Without the supermodularity of the objective, solving or even approximating simple instances of~\eqref{P:generic} can be quite challenging: there exist functions~$\delta$-close to supermodular that cannot be approximately optimized in polynomial time unless~$\delta$ is small~\cite{Horel16m}. Hence, \emph{how} we allow this ``diminishing returns'' assumption to be violated is crucial in obtaining useful guarantees. To that effect, we introduce~$\alpha$-supermodularity. This concept was first proposed in the context of auction design~\cite{Lehmann06c}, though it has since been rediscovered and deployed in discrete optimization, estimation, and control~\cite{Sviridenko14o, Chamon16n, Chamon17a, Chamon20a, Bian17g, Summers19p}.

\begin{definition}[$\alpha$-supermodularity]
	\label{D:alphaSM}

A set function~$f: 2^\calE \to \setR$ is \emph{$\alpha$-supermodular}, $\alpha \in \setR_+$, if
\begin{equation}\label{E:alphaSM}
	\Delta_u f(\calA) \geq \alpha \, \Delta_u f(\calB)
		\text{.}
\end{equation}
for all sets~$\calA \subset \calB \subseteq \calE$ and all~$u \in \calE \setminus \calB$. A function~$f$ is \emph{$\alpha$-submodular} if~$-f$ is $\alpha$-supermodular.

\end{definition}

Notice that~$\alpha$-supermodularity is a hereditary property in the sense that any~$\alpha$-supermodular function is also~$\alpha^\prime$-supermodular for all~$\alpha^\prime < \alpha$. What is more, \eqref{E:alphaSM} always holds for~$\alpha = 0$ if~$f$ is monotone decreasing. For that reason, we are often interested in the largest value of~$\alpha$ for which~\eqref{E:alphaSM} holds, i.e.,
\begin{equation}\label{E:alpha}
	\bar{\alpha} =
	\min_{\substack{\calA \subset \calB \subseteq \calE \\
		u \in \calE \setminus \calB}}\
		\frac{\Delta_u f(\calA)}{\Delta_u f(\calB)}
		\text{.}
\end{equation}
When~$\bar{\alpha} \in \left(0,1\right)$, we say~$f$ is \emph{approximately supermodular}. If~$\bar{\alpha} \geq 1$, \eqref{E:alphaSM} implies~\eqref{E:supermodularity}, in which case we refer to~$f$ simply as \emph{supermodular}~\cite{Krause14s, Bach13l}. Thus, $\alpha$-supermodularity can be used to both relax or strengthen the classical concept of supermodularity in Definition~\ref{D:supermodularity}.

Although Definition~\ref{D:alphaSM} is in terms of singleton updates, the approximate diminishing return property in~\eqref{E:alphaSM} also holds for set-valued updates. This equivalent definition is often convenient in derivations.

\begin{proposition}
The function~$f: 2^\calE \to \setR$ is an $\alpha$-supermodular set function if and only if
\begin{equation}\label{E:genAlphaSM}
	f\left(\calY\right) - f\left(\calY \cup \calC\right)
		\geq \alpha \left[ f\left(\calZ\right) - f\left(\calZ \cup \calC\right) \right]
\end{equation}
for all sets~$\calY \subset \calZ \subseteq \calE$ and~$\calC \in \calE \setminus \calZ$.
\end{proposition}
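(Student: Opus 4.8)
The plan is to prove the two implications separately, with the reverse direction being immediate and the forward direction following from a telescoping argument.

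First I would establish that \eqref{E:genAlphaSM} implies Definition~\ref{D:alphaSM}. This is the easy direction: given any~$\calA \subset \calB \subseteq \calE$ and~$u \in \calE \setminus \calB$, simply instantiate \eqref{E:genAlphaSM} with~$\calY = \calA$, $\calZ = \calB$, and the singleton~$\calC = \{u\}$. Since~$u \notin \calB = \calZ$, this is an admissible choice, and the resulting inequality is precisely~$\Delta_u f(\calA) \geq \alpha \, \Delta_u f(\calB)$, recovering \eqref{E:alphaSM}.

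The substance lies in the forward direction. Given the singleton inequality \eqref{E:alphaSM}, fix~$\calY \subset \calZ \subseteq \calE$ and~$\calC \subseteq \calE \setminus \calZ$, and enumerate~$\calC = \{c_1, \dots, c_m\}$. Define the partial unions~$\calC_0 = \emptyset$ and~$\calC_j = \{c_1, \dots, c_j\}$, so that~$\calC_m = \calC$. The key observation is that both bracketed differences in \eqref{E:genAlphaSM} telescope into sums of singleton increments,
\[
	f(\calY) - f(\calY \cup \calC) = \sum_{j=1}^m \Delta_{c_j} f(\calY \cup \calC_{j-1}),
\]
and identically with~$\calZ$ in place of~$\calY$. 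I would then apply \eqref{E:alphaSM} term-by-term with~$\calA = \calY \cup \calC_{j-1}$, $\calB = \calZ \cup \calC_{j-1}$, and~$u = c_j$, obtaining~$\Delta_{c_j} f(\calY \cup \calC_{j-1}) \geq \alpha \, \Delta_{c_j} f(\calZ \cup \calC_{j-1})$ for each~$j$, and sum these~$m$ inequalities to conclude.

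The step requiring care is verifying that each pair~$(\calY \cup \calC_{j-1},\, \calZ \cup \calC_{j-1})$ together with the element~$c_j$ satisfies the hypotheses of Definition~\ref{D:alphaSM}. Since~$\calC \subseteq \calE \setminus \calZ$, the set~$\calC$ is disjoint from~$\calZ$; combined with~$\calY \subset \calZ$, this guarantees that the inclusion~$\calY \cup \calC_{j-1} \subset \calZ \cup \calC_{j-1}$ remains strict at every step (any element of~$\calZ \setminus \calY$ witnesses the strictness, as it lies in neither~$\calY$ nor~$\calC_{j-1}$), and that~$c_j \in \calE \setminus (\calZ \cup \calC_{j-1})$ because~$c_j \notin \calZ$ and the~$c_i$ are distinct. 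No monotonicity or sign assumption on the increments is needed: the term-by-term inequalities add directly, irrespective of the sign of~$\alpha$ or of each~$\Delta_{c_j} f$. I expect this bookkeeping of the strict-inclusion and exclusion conditions to be the only genuine obstacle; the telescoping itself is purely algebraic.
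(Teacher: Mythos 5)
Your proposal is correct and follows essentially the same route as the paper: necessity by taking $\calC = \{u\}$, and sufficiency by decomposing $f(\calY) - f(\calY \cup \calC)$ into singleton increments over a fixed enumeration of $\calC$ and applying \eqref{E:alphaSM} with $\calA = \calY \cup \calC_{j-1}$, $\calB = \calZ \cup \calC_{j-1}$ at each step (the paper packages the summation as an induction on $\abs{\calC}$, but that is the same telescoping argument). Your explicit verification that the strict inclusion and the exclusion $c_j \notin \calZ \cup \calC_{j-1}$ persist at every step is a detail the paper leaves implicit, and it is handled correctly.
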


\begin{proof}
The necessity part is straightforward: taking~$\calC = \{u\}$ for~$u \in \calE \setminus \calZ$ in~\eqref{E:genAlphaSM} yields~\eqref{E:alphaSM}. Sufficiency follows by induction. Consider an arbitrary enumeration of~$\calC$ and let~$\calC_t = \{u_1, \dots, u_t\}$ be the set containing its first~$t$ elements. Then, the base case~$\calC_1 = \{u_1\}$ holds directly from the definition of $\alpha$-supermodularity in~\eqref{E:alphaSM}. Suppose now that~\eqref{E:genAlphaSM} holds for~$\calC_{t-1}$, i.e.,
\begin{equation}\label{E:genAlphaSM1}
	f\left(\calY\right) - f\left(\calY \cup \calC_{t-1}\right) \geq
		\alpha \left[  f\left(\calZ\right) - f\left(\calZ \cup \calC_{t-1}\right)\right]
		\text{.}
\end{equation}
Let~$\calA = \calY \cup \calC_{t-1}$, $\calB = \calZ \cup \calC_{t-1}$, and~$e = u_t$ in~\eqref{E:alphaSM} to get
\begin{multline}\label{E:genAlphaSM2}
	f\left(\calY \cup \calC_{t-1}\right) - f\left(\calY \cup \calC_{t-1} \cup \{u_t\}\right) \geq
    \\
	\alpha \left[ f\left(\calZ \cup \calC_{t-1}\right)
		- f\left(\calZ \cup \calC_{t-1} \cup \{u_t\}\right)
	\right]
		\text{.}
\end{multline}
By summing~\eqref{E:genAlphaSM1} and~\eqref{E:genAlphaSM2}, we conclude that~\eqref{E:genAlphaSM} also holds for~$\calC_t = \calC_{t-1} \cup \{u_t\}$.
\end{proof}

\noindent Another important property of~$\alpha$-supermodularity is that it is preserved by non-negative affine combinations.

\begin{proposition}\label{T:preserveSM}
Consider the set functions~$f_i: 2^\calE \to \setR$ with~$f_i$ is~$\alpha_i$-supermodular. Then, for~$\theta_i \geq 0$ and~$\beta \in \setR$, the function~$g = \sum_i \theta_i f_i + \beta$ is~$\min(\alpha_i)$-supermodular.
\end{proposition}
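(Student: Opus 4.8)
The plan is to verify the defining inequality \eqref{E:alphaSM} for $g = \sum_i \theta_i f_i + \beta$ directly, by examining how the incremental variation operator $\Delta_u$ acts on such a combination. First I would observe that $\Delta_u$ is a linear operator in the sense that $\Delta_u(\sum_i \theta_i f_i + \beta)(\calX) = \sum_i \theta_i \Delta_u f_i(\calX)$; the additive constant $\beta$ vanishes because it cancels in the difference $g(\calX) - g(\calX \cup \{u\})$, and the scalars $\theta_i$ pull out of the difference by linearity. Thus the key structural fact is that $\Delta_u g(\calX) = \sum_i \theta_i \Delta_u f_i(\calX)$, independent of $\beta$.

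With this identity in hand, fix arbitrary sets $\calA \subset \calB \subseteq \calE$ and an element $u \in \calE \setminus \calB$. For each $i$, the $\alpha_i$-supermodularity of $f_i$ gives $\Delta_u f_i(\calA) \geq \alpha_i \, \Delta_u f_i(\calB)$. The main obstacle — and the only subtlety worth flagging — is that I may not simply replace each $\alpha_i$ by $\min_j(\alpha_j)$ in these inequalities, because $\Delta_u f_i(\calB)$ need not be sign-definite: if it were negative, lowering the coefficient would reverse the inequality. However, monotonicity is the needed hypothesis here (the paper's functions of interest are monotone decreasing, making $\Delta_u f_i(\calB) \geq 0$), and with $\Delta_u f_i(\calB) \geq 0$ and $\alpha_i \geq \min_j(\alpha_j)$ one gets $\alpha_i \, \Delta_u f_i(\calB) \geq \min_j(\alpha_j) \, \Delta_u f_i(\calB)$, hence $\Delta_u f_i(\calA) \geq \min_j(\alpha_j) \, \Delta_u f_i(\calB)$ for every $i$.

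Finally I would multiply each of these inequalities by $\theta_i \geq 0$ (which preserves the inequality direction precisely because the coefficients are non-negative) and sum over $i$, obtaining
\begin{equation*}
	\sum_i \theta_i \Delta_u f_i(\calA) \geq \min_j(\alpha_j) \sum_i \theta_i \Delta_u f_i(\calB).
\end{equation*}
By the linearity identity from the first step, the left-hand side is $\Delta_u g(\calA)$ and the right-hand side is $\min_j(\alpha_j) \, \Delta_u g(\calB)$, which is exactly \eqref{E:alphaSM} for $g$ with parameter $\min_j(\alpha_j)$. Since $\calA$, $\calB$, and $u$ were arbitrary, $g$ is $\min_j(\alpha_j)$-supermodular, completing the argument. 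I expect the non-negativity of the $\theta_i$ and the monotonicity needed to control the sign of $\Delta_u f_i(\calB)$ to be the two places where the hypotheses are genuinely used; everything else is a routine application of linearity.
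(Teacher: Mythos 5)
Your proof is correct: the paper itself does not spell out an argument (it defers to an external lemma in~\cite{Chamon20a}), and the direct linearity-of-$\Delta_u$ verification you give is the standard and expected route. You also correctly isolate the one genuine subtlety---replacing $\alpha_i$ by $\min_j(\alpha_j)$ requires $\Delta_u f_i(\calB) \geq 0$, i.e., the implicit monotone-decreasing hypothesis under which the paper operates throughout (the same hypothesis underlies its earlier claim that $\alpha$-supermodularity is hereditary in $\alpha$)---so nothing further is needed.
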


\begin{proof}
See~\cite[Lemma~1]{Chamon20a}
\end{proof}

Definition~\ref{D:alphaSM} turns out to be a fruitful relaxation of Definition~\ref{D:supermodularity}. For instance, it has been shown that if~$f$ is~$\alpha$-supermodular and~$\calI$ is composed of a single uniform matroid~(cardinality constraint), then its greedy solution is~$(1-e^{-\alpha})$-optimal~\cite{Lehmann06c, Sviridenko14o, Chamon16n, Chamon20a}. When~$\calI$ is a single arbitrary matroid~($P = 1$), the greedy solution is~$\alpha/2$-optimal~\cite{Chamon19m}. In these cases, $\alpha$ not only quantifies the diminishing return violations, but also the loss in near-optimality guarantee due to these violations. Explicit lower bounds on~$\alpha$ have been provided in the context of sampling, experiment design, estimation, and control~\cite{Chamon16n, Chamon17a, Chamon19m, Chamon20a}. Still, these guarantees do not directly extend to the matroid intersection constraint in~\eqref{E:matroidIntersection}. In the next section, we determine how relaxing supermodularity to~$\alpha$-supermodularity affects the guarantee in~\eqref{E:greedySubmodularMatroids}.

Before proceeding, however, it is worth noting that~$\alpha$ in~\eqref{E:alpha} is related to the \emph{supermodularity ratio}, a relaxation based on a different, though equivalent, definition of supermodularity. Explicitly, the supermodularity ratio is defined as the largest~$\gamma$ for which
\begin{equation}\label{E:supermodularityRatio}
	\sum_{w \in \calW} \Delta_w f(\calL)
	\geq
	\gamma \left[ f(\calL) - f(\calL \cup \calW) \right]
		\text{,}
\end{equation}
for all disjoint sets~$\calW,\calL \subseteq \calE$. It was introduced in~\cite{Das11s}%
\footnote{Although~\cite{Das11s} and subsequent literature define~$\gamma$ in terms of submodularity, it can be recovered by taking~$-f$ in~\eqref{E:supermodularityRatio}. Recall that if~$f$ is supermodular, then~$-f$ is submodular.}
in the context of variable selection, although the resulting guarantees depended on sparse eigenvalues that are NP-hard to compute. Explicit~(P-computable) lower bounds on~$\gamma$ were derived in~\cite{Bian17g, Karaca18e, Summers19p}, although it is worth noting that the bounds on~$\alpha$ previously obtained in~\cite{Chamon16n, Chamon17t} also hold for the supermodularity ratio:

\begin{proposition}\label{T:alphaGamma}

Let~$f$ be an~$\alpha$-supermodular and denote its supermodularity ratio by~$\gamma$. Then, $\alpha \leq \gamma$.
\end{proposition}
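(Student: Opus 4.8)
The plan is to recast the claim as the statement that the supermodularity ratio obeys $\gamma \geq \alpha$. Since $\gamma$ is \emph{defined} as the largest constant for which \eqref{E:supermodularityRatio} holds, it suffices to verify that \eqref{E:supermodularityRatio} remains valid after replacing $\gamma$ by $\alpha$; that is, to show $\sum_{w \in \calW} \Delta_w f(\calL) \geq \alpha\,[f(\calL) - f(\calL \cup \calW)]$ for every pair of disjoint sets $\calW, \calL \subseteq \calE$. Establishing this inequality for the constant $\alpha$ immediately certifies $\gamma \geq \alpha$, which is the proposition.

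First I would fix such $\calW$ and $\calL$, enumerate $\calW = \{w_1, \dots, w_m\}$, and introduce the telescoping chain $\calL_0 = \calL$ and $\calL_j = \calL \cup \{w_1, \dots, w_j\}$. Collapsing the right-hand side along this chain gives $f(\calL) - f(\calL \cup \calW) = \sum_{j=1}^m \Delta_{w_j} f(\calL_{j-1})$, with $\Delta$ the incremental variation of \eqref{E:delta}. The left-hand side is simply $\sum_{j=1}^m \Delta_{w_j} f(\calL_0)$, so the inequality reduces to a termwise comparison of $\Delta_{w_j} f(\calL_0)$ against $\alpha\,\Delta_{w_j} f(\calL_{j-1})$.

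For each index $j \geq 2$ we have $\calL_0 = \calL \subset \calL_{j-1}$ with $w_j \notin \calL_{j-1}$, so $\alpha$-supermodularity \eqref{E:alphaSM} applies directly with $\calA = \calL_0$, $\calB = \calL_{j-1}$, and $u = w_j$, yielding $\Delta_{w_j} f(\calL_0) \geq \alpha\,\Delta_{w_j} f(\calL_{j-1})$. Summing these inequalities over $j \geq 2$ and adjoining the matching first term then reconstitutes the bound we want.

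The main obstacle is the boundary term $j = 1$, for which $\calL_{j-1} = \calL_0$ and the strict-inclusion hypothesis $\calA \subset \calB$ of \eqref{E:alphaSM} fails. Here I would instead rely on two facts: that $f$ is monotone decreasing, so $\Delta_{w_1} f(\calL_0) \geq 0$, and that we operate in the approximately supermodular regime $\alpha \leq 1$. Together these give $\Delta_{w_1} f(\calL_0) \geq \alpha\,\Delta_{w_1} f(\calL_0)$, exactly what is needed to close the termwise comparison. This restriction to $\alpha \leq 1$ is both harmless and intrinsic: taking $\calW$ to be a singleton in \eqref{E:supermodularityRatio} already forces $\gamma \leq 1$, so the interesting content of the proposition lies precisely in the range $\alpha \in (0,1]$.
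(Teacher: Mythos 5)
Your proof is correct and follows essentially the same route as the paper's: telescope $f(\calL) - f(\calL \cup \calW)$ along an enumeration of $\calW$ and bound each increment termwise using $\alpha$-supermodularity, then invoke the definition of $\gamma$ as the largest constant satisfying~\eqref{E:supermodularityRatio}. Your explicit treatment of the $j=1$ boundary term (where $\calA=\calB$, so the strict-inclusion hypothesis of~\eqref{E:alphaSM} fails and one must fall back on monotonicity together with $\alpha\leq 1$) is a detail the paper's proof silently glosses over, but it is a clarification of the same argument rather than a different one.
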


\begin{proof}
We proceed by showing that~\eqref{E:supermodularityRatio} holds with~$\gamma = \alpha$ for any~$\alpha$-supermodular function. To do so, consider an enumeration of~$\calW = \{w_1,\dots,w_\abs{\calW}\}$ and write
\begin{multline}\label{E:alphaGamma2}
	f\left( \calL \right) - f\left( \calL \cup \calW \right) =
		\sum_{k = 1}^\abs{\calW} \Delta_{w_k} f\left( \calL \cup \{w_1,\dots,w_{k-1}\} \right)
		\text{.}
\end{multline}
Since~$f$ is~$\alpha$-supermodular, we can upper bound each of the increments in~\eqref{E:alphaGamma2} using~\eqref{E:alphaSM} to obtain
\begin{equation}\label{E:alphaGamma1}
	f\left( \calL \right) - f\left( \calL \cup \calW \right)
	\leq
	\alpha^{-1} \sum_{k = 1}^\abs{\calW} \Delta_{w_k} f\left( \calL\right)
		\text{.}
\end{equation}
Comparing~\eqref{E:supermodularityRatio} and~\eqref{E:alphaGamma1} concludes the proof.
\end{proof}

\subsection{Matroid-constrained $\alpha$-supermodular minimization}
\label{S:nearOptimal}

At this point, we have fully specified the optimization problem we wish to solve and the algorithm we will use to do so. Explicitly, we are interested in using the greedy method~\eqref{E:greedy} to approximate the solution of~\eqref{P:generic} when the constraint~$\calI$ is an intersection of matroids as in~\eqref{E:matroidIntersection} and the objective~$f$ is a monotone decreasing, $\alpha$-supermodular function~(Definition~\ref{D:alphaSM}). The main result of this section, presented in Theorem~\ref{T:approx}, provides an approximation guarantee under these conditions.

\begin{theorem}\label{T:approx}

Consider~\eqref{P:generic} with~$\calI$ being an intersection of matroids as in~\eqref{E:matroidIntersection} and~$f$ being a normalized, monotone decreasing, $\alpha$-supermodular function~(i.e., $f(\calA) \leq 0$ for all~$\calA \subseteq \calE$). Let~$\calG_T$ be its greedy solution from~\eqref{E:greedy}. Then,
\begin{equation}\label{E:nearOptimality}
	f(\calG_T) \leq \frac{\alpha}{\alpha + P} \, f(\calX^\star)
		\text{.}
\end{equation}
\end{theorem}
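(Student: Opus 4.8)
The plan is to prove the inequality equivalent to \eqref{E:nearOptimality} obtained by clearing denominators. Since $f$ is non-positive and $\alpha, P > 0$, the claim $f(\calG_T) \le \tfrac{\alpha}{\alpha+P} f(\calX^\star)$ is equivalent to $P(-f(\calG_T)) \ge \alpha[f(\calG_T) - f(\calX^\star)]$, and because $f$ is monotone decreasing with $\calX^\star \subseteq \calG_T \cup \calX^\star$, it suffices to establish the stronger bound
\[
	P\bigl(-f(\calG_T)\bigr) \ge \alpha\bigl[f(\calG_T) - f(\calG_T \cup \calX^\star)\bigr] \quad (\star) \text{.}
\]
I would write $\calG_T = \{g_0,\dots,g_{T-1}\}$ in order of selection, set $d_t = \Delta_{g_t} f(\calG_t) = f(\calG_t) - f(\calG_{t+1}) \ge 0$, and note that $-f(\calG_T) = \sum_{t=0}^{T-1} d_t$ by normalization. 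The left side of $(\star)$ is then $P\sum_t d_t$, so the whole argument reduces to bounding the marginal gain $f(\calG_T) - f(\calG_T\cup\calX^\star)$ by the greedy increments $d_t$.

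The \emph{analytic step} telescopes and invokes $\alpha$-supermodularity exactly once. Enumerating $\calX^\star\setminus\calG_T = \{x_1,\dots,x_m\}$, I write $f(\calG_T) - f(\calG_T\cup\calX^\star) = \sum_{j} \Delta_{x_j} f(\calG_T\cup\{x_1,\dots,x_{j-1}\})$. For each $x_j$ I will exhibit a greedy step $t_j$ at which $x_j$ was a feasible addition, i.e. $\calG_{t_j}\cup\{x_j\}\in\calI$. Applying \eqref{E:alphaSM} with $\calA = \calG_{t_j} \subseteq \calB = \calG_T\cup\{x_1,\dots,x_{j-1}\}$ gives $\Delta_{x_j} f(\calB) \le \alpha^{-1}\Delta_{x_j} f(\calG_{t_j})$, and the greedy optimality of \eqref{E:greedyElement} at step $t_j$ (where $x_j$ is an admissible candidate) gives $\Delta_{x_j} f(\calG_{t_j}) \le d_{t_j}$. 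Hence $f(\calG_T)-f(\calG_T\cup\calX^\star) \le \alpha^{-1}\sum_j d_{t_j}$. It is crucial that $\alpha$-supermodularity is used only here, to push each increment down to the single reference set $\calG_{t_j}$; a second application would cost an extra factor $\alpha^{-1}$ and ruin the $\alpha/(\alpha+P)$ constant.

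The crux is the \emph{charging lemma}: there is an assignment $x \mapsto t(x)$ of $\calX^\star\setminus\calG_T$ to greedy steps with $\calG_{t(x)}\cup\{x\}\in\calI$ and at most $P$ elements sent to any one step. Granting it, each $d_t$ appears at most $P$ times, so $\sum_j d_{t_j} \le P\sum_t d_t = P(-f(\calG_T))$, which together with the previous bound yields $(\star)$. To build the assignment I would model it as a degree-constrained bipartite matching (elements versus step-slots, capacity $P$ per step) and verify Hall's condition using Proposition~\ref{T:exchangeIntersection}. The enabling observation is that admissibility is a threshold in $t$: since $\calI$ is downward closed (P.2 of Definition~\ref{D:Matroid}) and $\calG_t\subset\calG_{t+1}$, if $\calG_{t+1}\cup\{x\}\in\calI$ then $\calG_t\cup\{x\}\in\calI$, so the steps admitting any fixed $\calY\subseteq\calX^\star\setminus\calG_T$ form an initial segment $\{0,\dots,\tau^*\}$.

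If Hall's condition failed for some $\calY$, i.e. $\abs{\calY} > P(\tau^*+1)$, then applying Proposition~\ref{T:exchangeIntersection} to $\calA=\calG_{\tau^*+1}$ and $\calB=\calY$ (which lies in $\calI$ by downward closure) would produce an element of $\calY$ still addable to $\calG_{\tau^*+1}$, contradicting the definition of $\tau^*$ (or the maximality of $\calG_T$ when $\tau^*+1=T$). This establishes the lemma and closes the argument. I expect the genuine difficulty to lie precisely in setting up this combinatorial charging correctly from the matroid-intersection exchange property, rather than in the surrounding arithmetic, together with the discipline of invoking $\alpha$-supermodularity only once so as to retain the tight factor.
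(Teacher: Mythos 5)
Your proposal is correct and follows essentially the same route as the paper: telescope $f(\calG_T)-f(\calG_T\cup\calX^\star)$ over the optimal elements, apply $\alpha$-supermodularity exactly once per increment to drop down to a greedy prefix $\calG_{t}$, compare with the greedy choice $g_{t}$, and charge at most $P$ optimal elements to each greedy step via Proposition~\ref{T:exchangeIntersection}. The only difference is one of explicitness: the paper asserts in one line the existence of the partition $\{\calX^\star_t\}$ with $\abs{\calX^\star_t}\le P$ satisfying \eqref{E:greedyImproves}, whereas you justify the same assignment through a Hall's-condition argument resting on the same exchange property.
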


Theorem~\ref{T:approx} provides a near-optimal certificate for the greedy solution of $\alpha$-supermodular minimization problems subject to multiple matroid constraints. Since the values of~$f$ are non-positive due to the normalization assumption, the result in~\eqref{E:nearOptimality} may not be straightforward to interpret. Nevertheless, it can be written equivalently in terms of improvements with respect to the empty solution, namely,
\begin{equation*}
	\frac{f(\calG) - f(\calX^\star)}{f(\emptyset) - f(\calX^\star)}
		\leq \frac{P}{\alpha + P}
		\text{.}
\end{equation*}

As expected from previous results, the guarantee in~\eqref{E:nearOptimality} bounds the suboptimality of greedy search in terms of the~$\alpha$-supermodularity of the cost function. When~$\alpha < 1$, \eqref{E:nearOptimality} quantifies the loss in performance guarantee due to the objective~$f$ violating the diminishing returns property. Confirming our initial intuition, the larger the violations, i.e., the smaller~$\alpha$, the worse the guarantee. On the other hand, \eqref{E:nearOptimality} shows that the classical certificate for supermodular functions in~\eqref{E:greedySubmodularMatroids} can be strengthened when~$f$ has stronger diminishing returns structures, i.e., when~$\alpha \geq 1$. It is easy to see that Theorem~\ref{T:approx} is consistent with previous results for single~\cite{Nemhauser78a} and multiple~\cite{Fisher78a} matroid constraints. Indeed, we can recover~\eqref{E:greedySubmodularMatroids} by taking~$\alpha = 1$ in~\eqref{E:nearOptimality}.

Observe, however, that~\eqref{E:nearOptimality} decreases linearly with~$P$. In other words, the guarantees for greedy search deteriorates as more matroids are needed to represent~$\calI$. In a sense, this is the constraint counterpart of~$\alpha$-supermodularity: the further away from a pure matroid the constraints are, the worst the greedy algorithm is guaranteed to perform. Effectively, \eqref{E:nearOptimality} states that the more constraints need to be satisfied, i.e., the larger~$P$, the harder the problem becomes. Note that~$P$ can be replaced by the minimum number of matroids needed to represent~$\calI$, so that the guarantee from Corollary~\ref{T:guarantee} can sometimes be improved. Determining this minimum number, however, can be quite intricate. Still, Theorem~\ref{T:approx} is a \emph{worst case} bound and as is the case with the classical result for greedy supermodular minimization~\cite{Nemhauser78a, Fisher78a}, better performance is often obtained in practice.

It is worth noting that in the single matroid case~($P = 1$), Theorem~\ref{T:approx} is strictly~(though not significantly) stronger than the guarantee from~\cite{Chamon19m} for~$\alpha < 1$. This difference is more accentuated for small~$\alpha$: for instance, if~$\alpha = 0.2$, \eqref{E:nearOptimality} is~$60\%$ stronger than the certificate in~\cite{Chamon19m}. A similar certificate was obtained in~\cite{Lehmann06c} with~$\alpha \geq 1$ for a single matroid constraints.

We now proceed with the proof of Theorem~\ref{T:approx}.

\begin{proof}
This proof follows similarly to the one for the cardinality constraint~(uniform matroid) problem, but relying on the exchange property of matroids through Proposition~\ref{T:exchangeIntersection}. It may be informative to recall the proof technique in that simpler case first, e.g., by referring to~\cite[Thm.~1]{Chamon20a}.

Let~$\abs{\calX^\star} = r$. Partition the optimal solution in~\eqref{P:generic} such that~$\calX^\star = \bigcup_{j = 0}^{\floor{r/P}} \calX^\star_j$, $\abs{\calX^\star_j} \leq P$, and
\begin{equation}\label{E:greedyImproves}
	\calG_t \cup \{x^\star\} \in \calI \text{ for } x^\star \in \calX^\star_t
		\text{ for all } t = 0,\dots,\floor{\frac{r}{P}}
		\text{.}
\end{equation}
Observe that such a partition exists due to the matroid exchange property from Proposition~\ref{T:exchangeIntersection}. Fix an arbitrary enumeration of each partition~$\calX^\star_t = \{x^\star_{t1}, \dots, x^\star_{tp}\}$. The following proof is independent of the specific enumeration.

Use the fact that~$f$ is monotone decreasing to write
\begin{equation}\label{E:telescopic}
\begin{aligned}
	f(\calX^{\star}) &\geq f(\calG_T \cup \calX^{\star})
	\\
	{}&= f(\calG_T) - \sum_{t = 0}^{\floor{r/P}}
		\left[ \sum_{j = 1}^{\abs{\calX^\star_t}} \Delta_{x_{tj}^\star} f(\calT^{j-1}_{t}) \right]
	\text{,}
\end{aligned}
\end{equation}
where the set~$\calT_t^j$ contains the greedy solution~$\calG_T$, the elements of all partitions~$\calX^\star_i$ up to~$t-1$, and the first~$j$ elements of partition~$\calX^\star_t$. They can be defined recursively as~$\calT^0_{0} = \calG_T$, $\calT^0_{t} = \calT^{0}_{t-1} \cup \calX^\star_t$, and~$\calT^j_{t} = \calT^{0}_{t-1} \cup \{x^\star_{t1}, \dots, x^\star_{t j}\}$. Then, we can bound~\eqref{E:telescopic} by distinguishing two cases: (i)~if~$x^\star_{tj} \notin \calT_t^{j-1}$, the~$\alpha$-supermodularity of~$f$ yields
\begin{equation}\label{E:deltaBound}
	\Delta_{x^\star_{tj}} f\left( \calT^{j-1}_{t} \right)
		\leq \alpha^{-1} \Delta_{x^\star_{tj}} f\left( \calG_{t} \right)
		\text{,}
\end{equation}
since~$\calG_{t} \subseteq \calG_T \subseteq \calT^j_{t}$ for all~$t$ and~$j$; (ii)~if~$x^\star_{tj} \in \calT^{j-1}_{t}$, then~$\Delta_{x^\star_{tj}} f\left( \calT^{j-1}_{t} \right) = 0$ and~\eqref{E:deltaBound} holds trivially. Using~\eqref{E:deltaBound} in~\eqref{E:telescopic} gives
\begin{equation}\label{E:telescopicBound}
	f(\calX^{\star}) \geq f(\calG_T) - \alpha^{-1} \sum_{t = 0}^{\floor{r/P}}
		\left[ \sum_{j = 1}^{\abs{\calX^\star_t}} \Delta_{x_{tj}^\star} f(\calG_{t}) \right]
		\text{.}
\end{equation}

The bound in~\eqref{E:telescopicBound} can be simplified using the greedy nature of the algorithm in~\eqref{E:greedy}. Indeed, observe that~\eqref{E:greedyImproves} implies that~$\Delta_{x_{tj}^\star} f(\calG_{t}) \leq \Delta_{g_t} f\left( \calG_{t} \right)$ for~$g_t$ is the~$t$-th element selected as in~\eqref{E:greedyElement}. Hence,
\begin{equation}\label{E:telescopicBound2}
	f(\calX^{\star}) \geq f(\calG_T) - \alpha^{-1} \sum_{t = 0}^{\floor{r/P}}
		\left[ \sum_{j = 1}^{\abs{\calX^\star_t}} \Delta_{g_{t}} f(\calG_{t}) \right]
		\text{.}
\end{equation}
Using the fact that the increments are always non-negative~[see~\eqref{E:delta}] and since~$\abs{\calX^\star_t} \leq P$, \eqref{E:telescopicBound2} reduces to
\begin{equation}\label{E:telescopicBound3}
	f(\calX^{\star}) \geq f(\calG_T) - \alpha^{-1} P \sum_{t = 0}^{\floor{r/P}}
		\Delta_{g_{t}} f(\calG_{t})
		\text{.}
\end{equation}
To conclude, observe from~\eqref{E:delta} that~$f(\calG_T) \leq f(\calG_t) = -\sum_{t = 0}^{t-1} \Delta_{g_{t}} f(\calG_{t})$ and recall from Proposition~\ref{T:exchangeIntersection} that~$T \geq r/P$. Thus,
\begin{equation*}
	f(\calX^{\star}) \geq (1 + \alpha^{-1} P) f(\calG_T)
		\text{,}
\end{equation*}
which can be rearranged as in~\eqref{E:nearOptimality}.
\end{proof}

\section{NEAR-OPTIMAL INPUT SCHEDULING}
\label{S:alphaLQR}

In the previous section, we established when, how, and how well solutions of the generic problem~\eqref{P:generic} can be approximated. We did so by developing a theory for the greedy optimization of~$\alpha$-supermodular functions subject to multiple matroid constraints. In the sequel, we reconcile this general theory with the control scheduling problem~\eqref{P:actuatorScheduling} we set out to solve in the beginning of this paper.

Start by noticing that the feasible set of~\eqref{P:actuatorScheduling} is exactly the intersection of the~$\calI_p$ and that restricting them to be independent sets of matroids is in fact quite mild. Indeed, all typical schedule constraints discussed so far can be described in those terms. They are in fact instances of uniform, partition, and transversal matroids respectively:
\begin{itemize}
	\item bound on the total number of control actions:\\$\calI = \{\calS \subseteq \calVbar \mid \abs{\calS} \leq s\}$,
	\item bound on the number of inputs used per time instant:\\$\calI = \{\calS \subseteq \calVbar \mid \abs{\calS \cap \calV_k} \leq s_k\}$, and
	\item restriction on the consecutive use of inputs:\\$\calI = \{\calS \subseteq \calVbar \mid v^k \notin \calS \text{ or } v^{k+1} \notin \calS \text{, for } v^k,v^{k+1} \in \calVbar \}$,
\end{itemize}
Under these conditions, \eqref{P:actuatorScheduling} can be solved greedily using Algorithm~\ref{L:greedy}. Note that Algorithm~\ref{L:greedy} does not directly implement~\eqref{E:greedy} for~\eqref{P:actuatorScheduling}. Both procedures are nevertheless equivalent due to the matroidal structure of~$\calI$. Indeed, for any set~$\calA \subset \calB \in \bigcap_{p = 1}^P \calI_p$, if~$\calA \cup \{g\} \notin \bigcap_{p = 1}^P \calI_p$ for~$g \in \calVbar$, then~$\calB \cup \{g\} \notin \bigcap_{p = 1}^P \calI_p$. In other words, if adding an element~$g$ to~$\calS_t$ would make the schedule infeasible, then adding it to~$\calS_{t^\prime}$, $t^\prime > t$, would do the same. As opposed to~\eqref{E:greedy}, Algorithm~\ref{L:greedy} simply discards such elements. Additionally, it omits the constant normalization~$V^\star(\emptyset)$ since it does not depend on the schedule.

\begin{algorithm}[tb]
\caption{Greedy algorithm for~\eqref{P:actuatorScheduling}}
	\label{L:greedy}
\setlength{\baselineskip}{1.2\baselineskip}
\begin{algorithmic}
	\State Let~$\calS_0 \leftarrow \emptyset$, $\calZ_0 \leftarrow \calE$, and~$t \leftarrow 0$

	\While{$\calZ_t \neq \emptyset$}
		\State $g \leftarrow \argmin_{u \in \calZ_t} V^\star(\calS_t \cup \{u\})$

		\State $\calZ_{t+1} \leftarrow \calZ_t \setminus \{g\}$

		\If{$\calS_t \cup \{g\} \in \cap_{p = 1}^P \calI_p$}
			\State $\calS_{t+1} \leftarrow \calS_t \cup \{g\}$
		\Else
			\State $\calS_{t+1} \leftarrow \calS_t$
		\EndIf

		\State $t \leftarrow t + 1$
	\EndWhile

	\State $\calS_g \leftarrow \calS_{t}$
\end{algorithmic}
\end{algorithm}

Suffices it now to show that the objective function~$J$ is~$\alpha$-supermodular and monotone decreasing as well as provide an explicit lower bound on~$\bar{\alpha}$ in~\eqref{E:alpha}. Without this bound, the guarantee from Theorem~\ref{T:approx} would be of little practical value as it could not be computed \emph{a priori}. What is more, near-optimality certificates based on~$\alpha$-supermodularity are vacuous unless we can show that~$\alpha$ is strictly larger than zero and, at least under certain conditions, substantially so. We address these issues with the following proposition:

\begin{proposition} \label{T:alphaBound}

Let~$\bA_k$ in~\eqref{E:dynamics} be full rank for all~$k$. The normalized actuator scheduling problem objective~$J$ is (i)~monotonically decreasing and (ii)~$\alpha$-supermodular with
\begin{equation}\label{E:alphaBound}
	\alpha \geq \min_{k = 0,\dots,N-1}\ \alpha_k
		\text{,}
\end{equation}
where
\begin{equation}
	\alpha_k \geq \frac{
		\lambda_{\min} \left[ \bPt_{k+1}^{-1}(\emptyset) \right]
	}{
		\lambda_{\max} \left[ \bPt_{k+1}^{-1}(\calVbar)
			+ \sum_{i \in \calV_k} r_{i,k}^{-1} \bbt_{i,k}^{} \bbt_{i,k}^T \right]
	}
		\text{,}
\end{equation}
for~$\bPt_{k+1}(\calX) = \bH_k^{1/2} \bP_{k+1}(\calX) \bH_k^{1/2}$, $\bbt_{i,k} = \bH_k^{-1/2} \bb_{i,k}$, $\bH_0 = \bA_{0} \bSigma_{0} \bA_{0}^T$, and~$\bH_k = \bA_{k} \bW_{k-1} \bA_{k}^T$ for~$k \geq 1$.

\end{proposition}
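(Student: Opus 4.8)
The plan is to work from the closed-form cost in Proposition~\ref{T:LQG}, first collapsing the backward recursion into a single sum that isolates the per-step information matrices, and then reading off both claims from the monotonicity and eigenvalue structure of those matrices. Writing $\bM_k(\calS) = \bP_{k+1}^{-1}(\calS) + \sum_{i \in \calS \cap \calV_k} r_{i,k}^{-1}\bb_{i,k}\bb_{i,k}^T$ so that $\bP_k(\calS) = \bQ_k + \bA_k^T\bM_k^{-1}(\calS)\bA_k$, I would substitute this identity into $V^\star$ and use $\trace[\bX\bA_k^T\bM_k^{-1}\bA_k] = \trace[\bA_k\bX\bA_k^T\bM_k^{-1}]$ to absorb every $\bQ_k$ term into a schedule-independent constant, obtaining $V^\star(\calS) = \text{const} + \sum_{k=0}^{N-1}\trace[\bH_k\bM_k^{-1}(\calS)]$. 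This is exactly where $\bH_0 = \bA_0\bSigma_0\bA_0^T$ and $\bH_k = \bA_k\bW_{k-1}\bA_k^T$ arise. Since $\bA_k$ is full rank and $\bSigma_0 \succ 0$ (resp. $\bW_{k-1}\succ0$), each $\bH_k \succ 0$, so the congruence $\bM_k \mapsto \bH_k^{-1/2}\bM_k\bH_k^{-1/2}$ is well defined; this is the substitution that produces $\bPt_{k+1}$ and $\bbt_{i,k}$.

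For monotonicity (i) I would prove by backward induction on $k$ that $\calX \subseteq \calY$ implies $\bP_k(\calX) \succeq \bP_k(\calY)$ in the Loewner order: the base case $\bP_N = \bQ_N$ is trivial, and in the inductive step enlarging the set only enlarges both $\bP_{k+1}^{-1}$ (by the hypothesis and antitonicity of inversion) and $\sum_i r_{i,k}^{-1}\bb_{i,k}\bb_{i,k}^T$, hence enlarges $\bM_k$ and shrinks $\bM_k^{-1}$. Because $\bSigma_0\succ0$ and $\bW_k\succeq0$, this propagates to $V^\star(\calX)\geq V^\star(\calY)$, so $J$ is normalized and monotone decreasing.

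For $\alpha$-supermodularity (ii) I would fix $u \in \calV_k$ and compute $\Delta_u V^\star(\calX)$ from the decomposition above. Term $k$ is perturbed directly by the rank-one update $r_{u,k}^{-1}\bb_{u,k}\bb_{u,k}^T$ on $\bM_k$, and the Sherman--Morrison identity gives its contribution as $r_{u,k}^{-1}\bb_{u,k}^T\bM_k^{-1}\bH_k\bM_k^{-1}\bb_{u,k}\,/\,(1 + r_{u,k}^{-1}\bb_{u,k}^T\bM_k^{-1}\bb_{u,k})$. Passing to the $\bH_k$-scaled coordinates turns this into a quadratic form in $\bbt_{u,k}$ governed by $\bm{\Gamma}_\calX := \bH_k^{-1/2}\bM_k(\calX)\bH_k^{-1/2}$, and part (i) yields the uniform sandwich $\bH_k^{-1/2}\bP_{k+1}^{-1}(\emptyset)\bH_k^{-1/2} \preceq \bm{\Gamma}_\calX \preceq \bH_k^{-1/2}\bM_k(\calVbar)\bH_k^{-1/2}$ for every feasible $\calX$ (the extremes being the empty and full schedules). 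Lower-bounding the numerator of the ratio $\Delta_u V^\star(\calA)/\Delta_u V^\star(\calB)$ by the smallest eigenvalue of the left matrix and upper-bounding the denominator by the largest eigenvalue of the right matrix is what yields $\alpha_k$, via the A-optimal ratio estimate of~\cite{Chamon20a}; taking the worst step gives $\alpha \geq \min_k \alpha_k$.

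The step I expect to be the genuine obstacle is that adding $u \in \calV_k$ does not only perturb term $k$: through $\bP_k$ it also changes $\bM_{k-1},\dots,\bM_0$, so $\Delta_u V^\star(\calX)$ carries propagated contributions from all earlier steps. A naive fix---proving a Loewner statement $\Delta_u\bP_k(\calA) \succeq \alpha\,\Delta_u\bP_k(\calB)$ and tracing it against the weights---fails, because the direct perturbation of $\bP_k$ is rank one and two rank-one matrices in different directions cannot dominate one another. I would therefore argue at the level of the scalar increments, organizing it as a backward induction in which the hypothesis controls the ratio of the downstream ($\bP_{k+1}$) perturbations and each propagated term inherits the same sandwich as the direct one; the key is that $\emptyset$ and $\calVbar$ bound every intermediate $\bM_j$ uniformly, so each summand of $\Delta_u V^\star$ obeys the relevant $\alpha_j$ and the elementary inequality $\sum_i a_i / \sum_i b_i \geq \min_i a_i/b_i$ delivers $\min_k \alpha_k$ for the aggregate.
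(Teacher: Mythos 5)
Your proposal follows essentially the same route as the paper's proof: the reduction of $V^\star(\calS)$ to a constant plus $\sum_{k}\trace[\bH_k \bM_k^{-1}(\calS)]$, the congruence by $\bH_k^{1/2}$, the Loewner sandwich between the empty and full schedules, and the aggregation of per-step ratios through a minimum are exactly the steps taken there (your elementary inequality $\sum_i a_i/\sum_i b_i \geq \min_i a_i/b_i$ is Proposition~\ref{T:preserveSM}, your sandwich is Proposition~\ref{T:explicitBounds}, and your backward Loewner induction for monotonicity is the same recursion used to prove that proposition). The only real divergence is that you rederive the per-step ratio bound by Sherman--Morrison, whereas the paper invokes the set-trace-function result of~\cite{Chamon20a}, reproduced as Lemma~\ref{T:alphaSTF}, as a black box. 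As for the obstacle you flag: the paper does not run a separate backward induction over scalar increments; it instead treats each $J_k(\calS)=\trace[\bPi_{k-1}\bP_k(\calS)]$ as a set function of the \emph{entire} schedule (which depends on $\calS$ only through $\calS\cap(\calV_k\cup\dots\cup\calV_{N-1})$), absorbs the cross-time propagation into the $\calS$-dependence of the base matrix $\bPt_{k+1}^{-1}(\calS)$, controls that dependence uniformly by taking the worst case over $\calS$ via the sandwich, and then sums with Proposition~\ref{T:preserveSM} --- so your instinct that the $\emptyset$/$\calVbar$ bounds plus the min-ratio inequality are what carry the argument is correct, and your planned induction is essentially a rederivation of that combination.
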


\begin{proof}
See appendix.
\end{proof}

\begin{remark}
The full rank hypothesis on~$\bA_k$ can be lifted using a continuity argument. However, the bound in~\eqref{E:alphaBound} is trivial~($\alpha \geq 0$) for rank deficient state transition matrices, so we focus only on the case of practical significance.
\end{remark}

Proposition~\ref{T:alphaBound} states that the objective of~\eqref{P:actuatorScheduling} is~$\alpha$-supermodular for~$\alpha$ as in~\eqref{E:alphaBound}. Notice that the lower bound on~$\bar{\alpha}$ is explicit in that it can be evaluated in terms of the parameters of the dynamical system and the weights~$\bQ_k$ and~$r_{i,k}$ in~\eqref{E:LQG}. \blue{What is more, the complexity of evaluating this bound on~$\alpha$ is essentially that of computing~$\bP_k$ from~\eqref{E:P}, i.e, the complexity of computing the LQG/LQR controller.} In other words, \eqref{E:alphaBound} allows us to efficiently determine~$\alpha$ for the objective~$J$ \emph{a priori}. Combining Proposition~\ref{T:alphaBound} and Theorem~\ref{T:approx} yields the approximation guarantee for the greedy solution of~\eqref{P:actuatorScheduling} collected below.

\begin{corollary}\label{T:guarantee}

Consider the control scheduling problem~\eqref{P:actuatorScheduling} in which~$(\calVbar,\calI_p)$ is a matroid for each~$p$ and let~$\calS^\star$ be its optimal solution and~$\calS_g$ be its greedy solution obtained using Algorithm~\ref{L:greedy}. Then,
\begin{equation}\label{E:schedulingGuarantee}
	J(\calS_g) \leq \frac{\alpha}{\alpha + P} \, J(\calS^\star)
\end{equation}
with~$\alpha \geq \min_{k = 0,\dots,N-1} \alpha_k$ for
\begin{equation}\label{E:alphaBoundGuarantee}
	\alpha_k \geq \frac{
		\lambda_{\min} \left[ \bPt_{k+1}^{-1}(\emptyset) \right]
	}{
		\lambda_{\max} \left[ \bPt_{k+1}^{-1}(\calVbar)
			+ \sum_{i \in \calV_k} r_{i,k}^{-1} \bbt_{i,k}^{} \bbt_{i,k}^T \right]
	}
		\text{,}
\end{equation}
where~$\bPt_{k+1}(\calX) = \bH_k^{1/2} \bP_{k+1}(\calX) \bH_k^{1/2}$, for~$\bP_{k}$ defined in~\eqref{E:P}, and~$\bbt_{i,k} = \bH_k^{-1/2} \bb_{i,k}$. The transformations are the positive-definite square roots of~$\bH_k$, defined as~$\bH_0 = \bA_{0} \bSigma_{0} \bA_{0}^T$ and~$\bH_k = \bA_{k} \bW_{k-1} \bA_{k}^T$ for~$k \geq 1$.

\end{corollary}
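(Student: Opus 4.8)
The plan is to obtain Corollary~\ref{T:guarantee} as a direct specialization of the machinery already in place: Theorem~\ref{T:approx} supplies the factor $\alpha/(\alpha+P)$ for any normalized, monotone decreasing, $\alpha$-supermodular objective minimized greedily over a matroid intersection, while Proposition~\ref{T:alphaBound} certifies that the LQG cost $J$ meets exactly these hypotheses with the stated $\alpha$. The only work is to recognize \eqref{P:actuatorScheduling} as an instance of \eqref{P:generic} and to confirm that the schedule $\calS_g$ returned by Algorithm~\ref{L:greedy} coincides with the abstract greedy iterate $\calG_T$ from \eqref{E:greedy}.

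First I would make the identification $\calE = \calVbar$, $f = J$, and $\calI = \bigcap_{p=1}^P \calI_p$, noting that under the corollary's hypothesis each $(\calVbar,\calI_p)$ is a matroid, so $\calI$ has precisely the form \eqref{E:matroidIntersection} required by Theorem~\ref{T:approx}. I would then verify the three conditions on the objective. Normalization is immediate from the definition $J(\calS) = V^\star(\calS) - V^\star(\emptyset)$, which gives $J(\emptyset) = 0$; monotone decrease and $\alpha$-supermodularity with $\alpha \geq \min_k \alpha_k$ are exactly parts (i) and (ii) of Proposition~\ref{T:alphaBound}, whose full-rank hypothesis on $\bA_k$ is what renders the certificate \eqref{E:alphaBoundGuarantee} nontrivial.

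Next I would establish that Algorithm~\ref{L:greedy} realizes the abstract procedure \eqref{E:greedy}. The two differ only in bookkeeping: \eqref{E:greedyElement} re-optimizes over all feasible additions at each step, whereas Algorithm~\ref{L:greedy} scans $\calVbar$ in order of decreasing objective improvement and permanently discards any element whose inclusion would violate $\calI$. As observed in the paragraph preceding Algorithm~\ref{L:greedy}, the hereditary (downward-closed) property P.2 of each matroid guarantees that once $\calS_t \cup \{g\} \notin \calI$, the same holds for every later, larger iterate; hence every element still admissible at step $t$ remains in the candidate pool, discarding is never premature, and the element accepted at each step is a minimizer of \eqref{E:greedyElement} over the feasible additions. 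The two procedures therefore accept the same elements in the same order and terminate at the same set, so $\calS_g = \calG_T$.

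With all hypotheses verified and the algorithms identified, Theorem~\ref{T:approx} applies verbatim to yield $J(\calS_g) \leq [\alpha/(\alpha+P)]\,J(\calS^\star)$, which is \eqref{E:schedulingGuarantee}; the explicit bound \eqref{E:alphaBoundGuarantee} carries over unchanged from Proposition~\ref{T:alphaBound}. Since the argument is purely assembly, there is no genuine analytic obstacle — the substantive content lives in Theorem~\ref{T:approx} and Proposition~\ref{T:alphaBound}. The one point demanding care is the equivalence $\calS_g = \calG_T$, because Algorithm~\ref{L:greedy} is presented in an implementation-oriented form; this is where I would be most explicit, leaning on the monotone-infeasibility property of matroid intersections to justify the permanent pruning of infeasible candidates.
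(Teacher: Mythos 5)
Your proposal is correct and follows essentially the same route as the paper: the corollary is obtained by instantiating Theorem~\ref{T:approx} with $\calE = \calVbar$, $f = J$, $\calI = \bigcap_p \calI_p$, invoking Proposition~\ref{T:alphaBound} for normalization, monotonicity, and the explicit $\alpha$ bound, and using the hereditary property of matroids to identify Algorithm~\ref{L:greedy} with the abstract greedy procedure~\eqref{E:greedy} --- exactly the assembly the paper performs in the text surrounding Algorithm~\ref{L:greedy} and Corollary~\ref{T:guarantee}. Your extra care on the equivalence $\calS_g = \calG_T$ is well placed and consistent with the paper's own (briefer) justification.
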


Corollary~\ref{T:guarantee} provides a near-optimal certificate for the greedy solution of the control scheduling problem~\eqref{P:actuatorScheduling} as a function of its parameters. Note that the larger the~$\alpha$, the better the guarantee, and that although~$J$ is not supermodular in general, there are situations in which its violations of the diminishing returns property are small.

To see when this is the case, observe that~\eqref{E:alphaBoundGuarantee} is large when
\begin{equation}\label{E:alphaConditions}
	\sum_{i \in \calV_0} r_{i,0}^{-1} \bbt_i \bbt_i^T \text{ is small}
	\quad \text{and} \quad
	\bPt_1^{-1}\left( \calVbar \right) \approx \bPt_1^{-1}(\emptyset)
		\text{.}
\end{equation}
Conditions in~\eqref{E:alphaConditions} occur when~$\diag\left( r_{i,k} \right) \ggcurly \bQ_k$, i.e., when the controller~\eqref{E:LQG} gives more weight to the input cost than state regulation. This condition is readily obtained from the definition of~$\bP_k$ in~\eqref{E:P}. Figures~\ref{F:alphabound_lqr} and~\ref{F:alphabound_lqg} illustrates these observations by evaluating the bound from~\eqref{E:alphaBoundGuarantee} for $100$ random systems with~$n = 100$ states controlled over a horizon of~$N = 4$ steps with~$\bQ = \bI$ and~$r_{i,k} = \gamma$~(see Section~\ref{S:sims} for details). Clearly, as the controller actions cost grows, i.e., as $\gamma$ increases, $\alpha$ grows and Corollary~\ref{T:guarantee} yields stronger guarantees. Note that this is a scenario of considerable practical value. It is well-known that if no restriction is imposed on the input cost, any controllable set of inputs can drive the system to any state in a single step~(\emph{dead-beat controller}). Hence, when~$\diag\left( r_{i,k} \right) \llcurly \bQ_k$, the optimal value of the LQG only really differentiates between schedules that lead to controllable and uncontrollable input sets. On the other hand, careful scheduling can have a considerable impact when the actuation~(or agent deployment) costs are high. This is also the case in which Corollary~\ref{T:guarantee} provides stronger guarantees.

Observe also that since matrices are weighted by~$\bH_k$, the condition numbers of~$\bA$, $\bSigma_0$, and~$\bW_k$ play an important role in the guarantees. Indeed, if~$\bH_k$ is poorly conditioned, there may be a large difference between the minimum and maximum eigenvalues in~\eqref{E:alphaBoundGuarantee}. This is illustrated in Figure~\ref{F:alphabound_lqr}, which shows that when the decay rate of the system modes are considerably different, i.e., the state transition matrix has large condition number~$\kappa(\bA)$, the guarantees from Corollary~\ref{T:guarantee} worsen. Figure~\ref{F:alphabound_lqg} illustrates this phenomenon for the LQG controller, showing how the value of~$\alpha$ decreases when the process noise does not affect the states homogeneously, i.e., the condition number of~$\bW_k$ grows.

In the sequel, we provide details on the experiments presented in this section and illustrate the use of greedy control scheduling under multiple non-trivial constraints using an agent dispatching application.

\begin{figure}[tb]
	\centering
	\includesvg[width=\columnwidth]{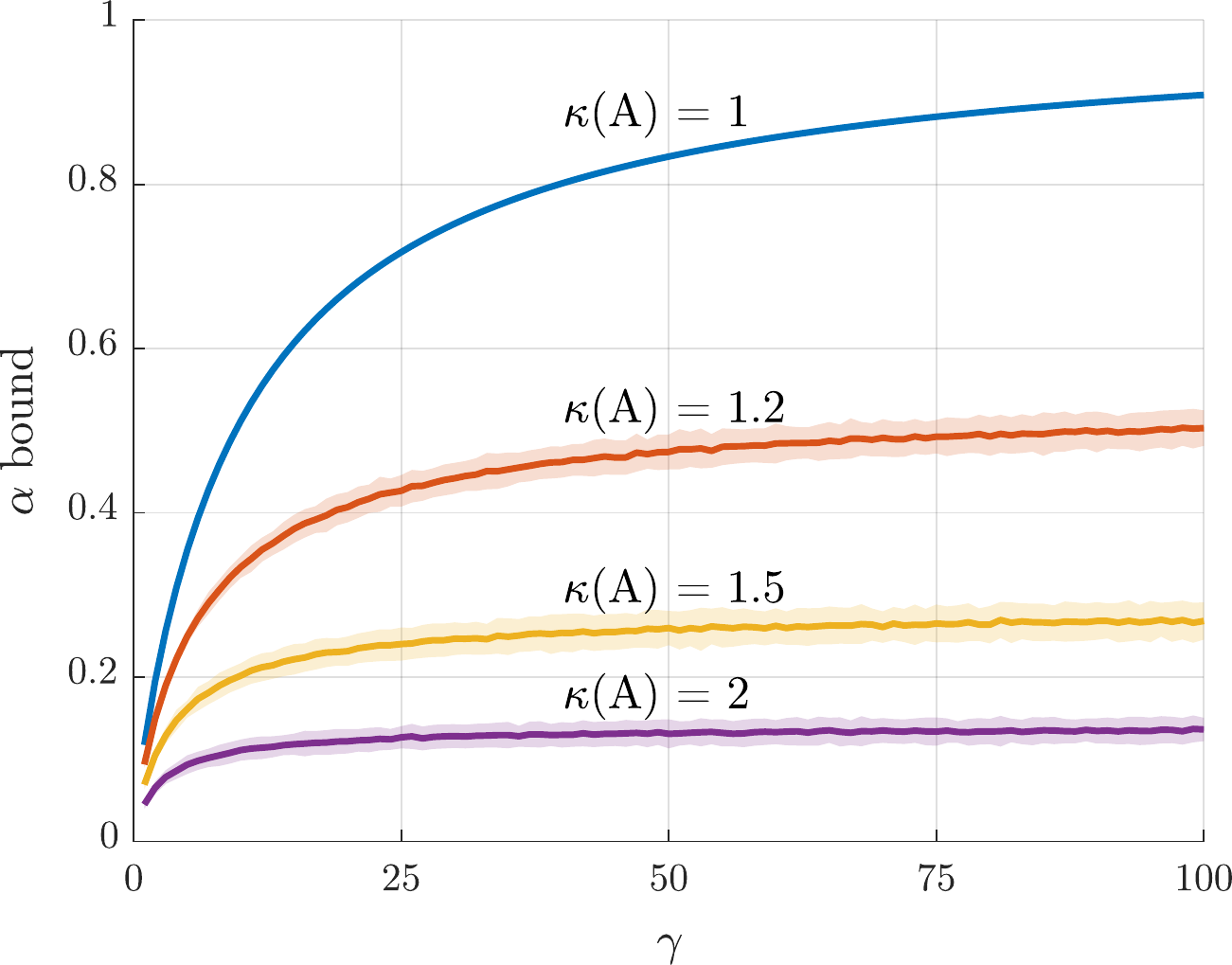}
    \caption{Bound on~$\alpha$ from~\eqref{E:alphaBound} for a deterministic dynamical system~($\bW_k = \bzero$) and a schedule of length~$N = 4$. Shaded regions span two standard deviations from the mean.}
    \label{F:alphabound_lqr}
\end{figure}

\section{NUMERICAL EXAMPLES}
\label{S:sims}

\subsection{Greedy scheduling performance}

We start by detailing the experiments in Figures~\ref{F:alphabound_lqr} and~\ref{F:alphabound_lqg}. We considered dynamical systems with~$n = 100$ states for which the elements of~$\bA$ were drawn randomly from a standard Gaussian distribution, the input matrix~$\bB = \bI$, i.e., direct state actuation, and~$\bPi_0 = 10^{-2} \bI$. The state transition matrix~$\bA$ was normalized so that its spectral radius is~$1.1$, i.e., the dynamical systems are unstable. In Figure~\ref{F:alphabound_lqr}, the dynamical systems are deterministic, i.e., $\bW_k = \bzero$. In Figure~\ref{F:alphabound_lqg}, $\bW_k$ is a diagonal matrix whose elements were drawn uniformly at random from~$[\kappa(\bW)^{-1}, 1]$ so that their condition number is~$\kappa(\bW)$. The figures show the result for~$100$ independently drawn dynamical systems considering~\eqref{P:actuatorScheduling} for~$\bQ = \bI$, $r_{i,k} = \gamma$ for all~$i$ and~$k$, and~$N = 4$ steps. \blue{In these experiments, we use a short horizon so that we are later able to compare our guarantees with the exact approximation ratios achieved by Algorithm~\ref{L:greedy}~(Figure~\ref{F:hist}), which requires computing the optimal schedule by exhaustive search.}

\begin{figure}[tb]
	\centering
	\includesvg[width=\columnwidth]{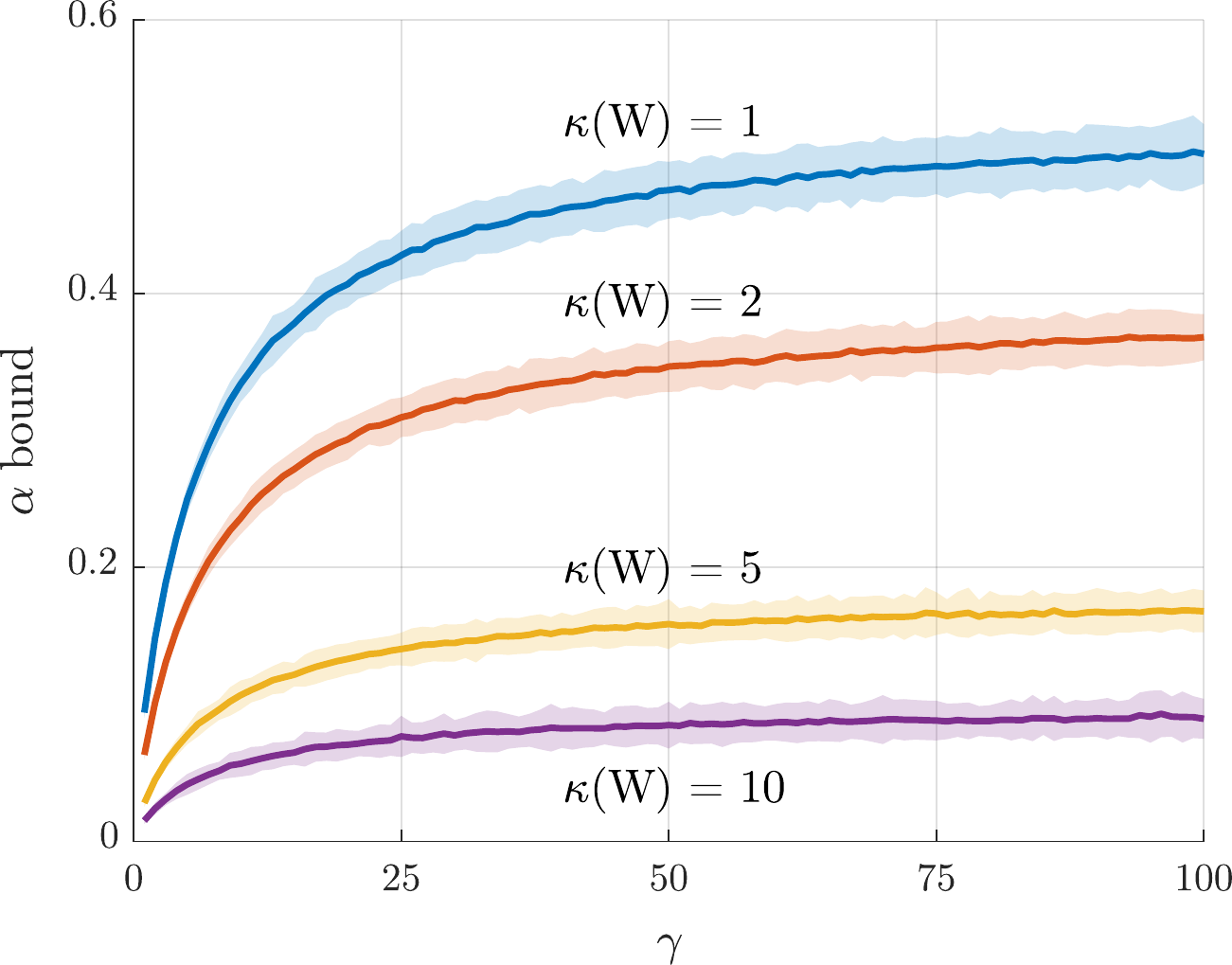}
    \caption{Bound on~$\alpha$ from~\eqref{E:alphaBound} for a dynamical system with disturbance and a schedule of length~$N = 4$. Shaded regions span two standard deviations from the mean.}
    \label{F:alphabound_lqg}
\end{figure}

\begin{figure*}[t]
\begin{minipage}[c]{0.32\textwidth}
\centering
\includesvg[width=\columnwidth]{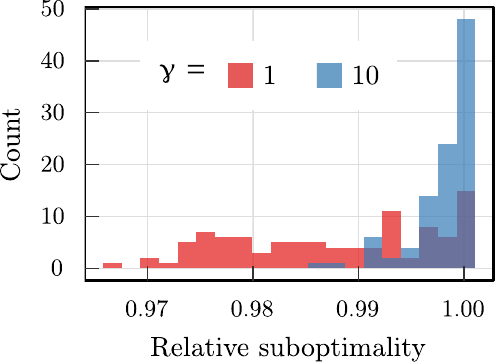}

{\small (a)~$P = 1$}
\end{minipage}
\hfill
\begin{minipage}[c]{0.32\textwidth}
\centering
\includesvg[width=\columnwidth]{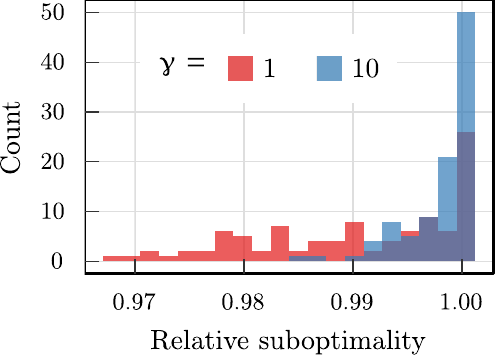}

{\small (b)~$P = 2$}
\end{minipage}
\hfill
\begin{minipage}[c]{0.32\textwidth}
\centering
\includesvg[width=\columnwidth]{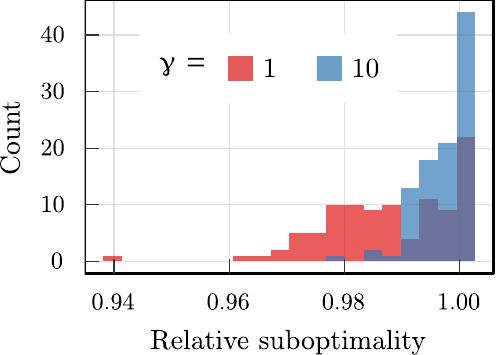}

{\small (c)~$P = 3$}
\end{minipage}
	\caption{Relative suboptimality of greedy scheduling for different constraints~($100$ system realizations). (a)~$\calI_1$~(less than~$2$ actuators per time step); (b)~$\calI_1$ and $\calI_2$~(less than~$5$ control actions over the horizon); (c) $\calI_1$, $\calI_2$, and $\calI_3$~(inputs cannot be used on consecutive time slots).}
	\label{F:hist}
\end{figure*}

While Figures~\ref{F:alphabound_lqr} and~\ref{F:alphabound_lqg}, along with Corollary~\ref{T:guarantee}, illustrate the wide range of parameters over which good performance certificates can be provided, it is worth noting that these are worst-case guarantees and that better results are common in practice. To illustrate this point, we evaluate the relative suboptimality of greedily selected schedules over~$100$ system realizations. Explicitly, we evaluate
\begin{equation*}
	\nu^\star(\calS_g) = \frac{J(\calS_g)}{J(\calS^\star)}
		\text{,}
\end{equation*}
where~$\calS_g$ and~$\calS^\star$ are the greedy~(Algorithm~\ref{L:greedy}) and optimal solutions of~\eqref{P:actuatorScheduling} respectively. Since~$\nu^\star$ depends on the optimal schedule, which can only be obtained by exhaustive search, we restrict ourselves to smaller dynamical systems with $n = 7$~states. State space matrices~$\bA$ and~$\bB$ are as before, with~$\kappa(\bA) = 1.2$, and~$\bW_k = \bzero$~(LQR case). We again take~$\bQ = \bI$ and~$r_{i,k} = \gamma$.

In Figure~\ref{F:hist}, we design schedules for~$N = 4$ steps horizons with different numbers of matroid constraints~$P$. In Figure~\ref{F:hist}a, we select at most~$2$ actuators per time step~($\calI_1 = \{\calS \subseteq \calVbar \mid \abs{\calS \cap \calV_k} \leq 2\}$). In addition to~$\calI_1$, Figure~\ref{F:hist}b also imposes that at most~$5$ control actions can be taken over the horizon~($\calI_2 = \{\calS \subseteq \calVbar \mid \abs{\calS} \leq 5\}$). Finally, Figure~\ref{F:hist}c includes a restriction on using the same input on consecutive time steps on top of~$\calI_1$ and~$\calI_2$. Despite the fact that the lower bounds on~$\nu^\star$~(recall that~$J$ is a non-positive function) from Corollary~\ref{T:guarantee} range from~$0.03$ to~$0.25$, Figure~\ref{F:hist} show that the typical performance of greedy scheduling in practice is considerably better. Though it did not often find the optimal schedule~(between~$15\%$ and~$23\%$ of the realizations for~$\gamma = 1$ and~$40\%$ and~$46\%$ for~$\gamma = 10$), the resulting schedule performances were at most~$5\%$ lower than the optimum. Note, however, that there exist specific dynamical systems for which greedy performs close to the guarantee in Corollary~\ref{T:guarantee}. Indeed, consider~$\bA = \bI$ and
\begin{equation*}
\bB = \begin{bmatrix}
2 & 1 & 0
\\
2 & 0 & 1
\\
1 & 1 & 1
\end{bmatrix}
	\text{.}
\end{equation*}
Under constraint~$\calI_1$, i.e., if we schedule up to~$2$ inputs per time step, over a~$N = 2$ steps horizon with~$\gamma = 100$, the guarantee in~\eqref{E:schedulingGuarantee} yields~$\nu^\star(\calS_g) \geq 0.392$, whereas in practice we achieve~$\nu^\star(\calS_g) \approx 0.423$.

\subsection{Agent dispatching on the Amazon river}

\begin{figure}[tb]
	\centering
	\includesvg{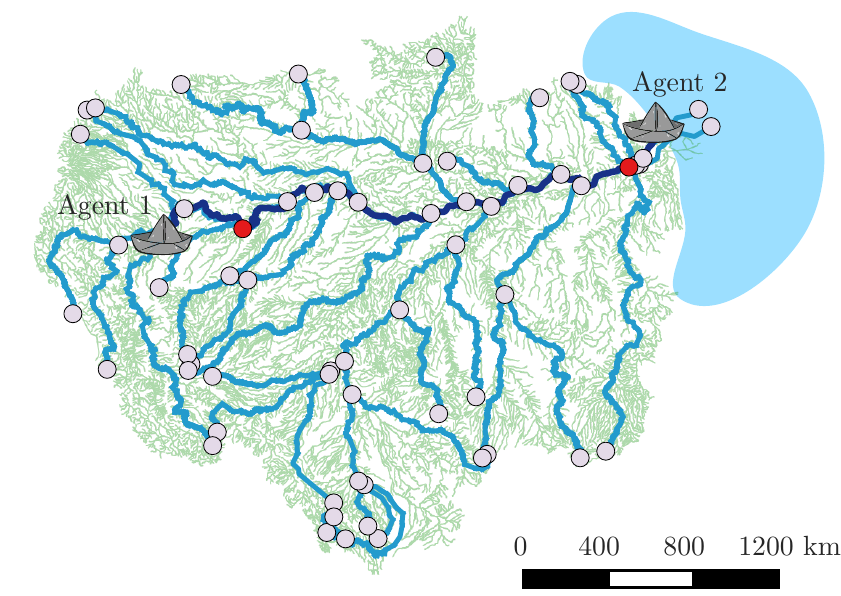}
	\caption{Amazon basin, amazon river~(dark blue trace), system states~(light grey circles), and chemical spill origins~(red circles).}
	\label{F:amazon}
\end{figure}

In this section, we illustrate the use of greedy control scheduling in an agent dispatching application to control the effect of spills on the Amazon river. Two agents navigate up and down the river~(dark blue curve in Figure~\ref{F:amazon}) over a preset route and use a chemical component to counteract damaging spills. Due to the limited capacity of each vessel and to avoid overusage, each agent is allowed to dump the component at most~$5$ times. What is more, at least~$2$ steps must be allowed between each action so the crew has time to setup. The goal of this dispatch is to reduce how much of the spill reaches the ocean~(light blue water mass in Figure~\ref{F:amazon}).

The Amazon drainage basin, showed in green in Figure~\ref{F:amazon}, covers $7.5$~million~$\text{km}^2$ and is composed of over~$7000$ tributaries. We use a simplified description of the basin~(blue traces in Figure~\ref{F:amazon}) obtained by smoothing the original map~\cite{Muller99u}. Using this river network, we construct a weighted directed tree~$\calG$ whose vertices are the circles in Figure~\ref{F:amazon} with the addition of a node midway between each circle. These vertices compose the~$n = 127$ states~$\bx_k$ of the dynamical system. In~$\calG$, two vertices are connected if water flows between them, i.e., if there is a blue line between them in Figure~\ref{F:amazon}. We assume that all river flow toward the ocean, denoted in Figure~\ref{F:amazon} by a light blue water mass. The adjacency matrix of~$\calG$ is then defined as
\begin{equation}\label{E:amazonAdjacency}
	[\bG]_{ij} =
	\begin{cases}
		\norm{\bz_i - \bz_j}
			\text{,} &\text{if } (i,j) \in \calE
		\\
		0
			\text{,} &\text{otherwise}
	\end{cases}
\end{equation}
where~$\bz_i \in \setR^2$ is the position of node~$i$ on the map. We define its Laplacian as~$\bL = \bD - \bG$, where~$\bD$ is a diagonal matrix whose elements are the sum of the columns of~$\bG$, and its symmetrized Laplacian as~$\bL^\prime = \bD^\prime - (\bG + \bG^T)$, where~$\bD^\prime$ is a diagonal matrix whose elements are the sum of the columns of~$\bG + \bG^T$.

\begin{figure}[tb]
	\centering
	\includesvg[width=\columnwidth]{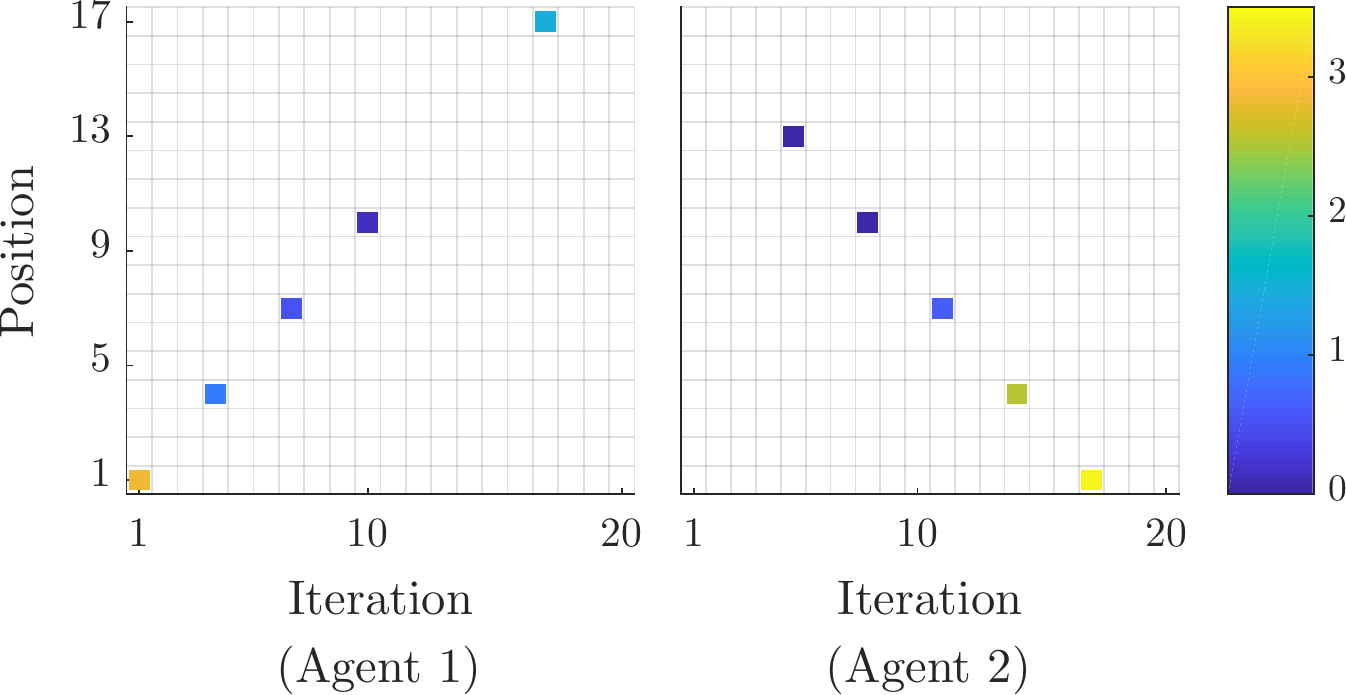}
	\caption{Greedy schedule and actuation energy of the spill control agents.}
	\label{F:amazonSchedule}
\end{figure}

Using these Laplacians, we define the state transition matrix of the dynamical system in~\eqref{E:dynamics} as
\begin{equation}\label{E:amazonDynamics}
	\bA = 0.901 \exp(-\bL \Delta t) + 0.099 \exp(- \bL^\prime \Delta t)
		\text{,}
\end{equation}
where~$\Delta t = 5$ is the sampling period. The dynamics in~\eqref{E:amazonDynamics} are a combination of two processes: the first term corresponds to the \emph{advection} process by which water flows to the ocean and the second term corresponds to a \emph{diffusion} process. The combination coefficients are chosen so that the system is marginally stable~($\norm{\bA} = 1$). In these experiments, we assume there is no process noise, i.e., $\bW_k = \bzero$ for all~$k$, and direct state actuation. The two spills are modeled by spikes in the initial state, namely~$\bx_0$ is zero except at the red nodes in Figure~\ref{F:amazon}.

Agent~$1$ navigates the Amazon river~(dark blue curve) left-to-right and Agent~$2$ navigates right-to-left~(starting near the ocean). They can only actuate on their current position and are only allowed to do so on the states marked as circles in Figure~\ref{F:amazon}. Thus, a centralized greedy scheduler designs a~$N = 20$ steps action plan for the two agents taking into account their positions at each time step and their total number of actions and duty cycle constraints. To do so, it assumes~$\bQ = \bI$ for both agents, but takes~$r_{i,k}^{(1)} = 10$ for Agent~$1$ and~$r_{i,k}^{(2)} = 20$ for Agent~$2$. In other words, Agent~$1$ is allowed to dump more cleaning component. The results of this dispatch are shown in Figures~\ref{F:amazonSchedule} and~\ref{F:amazonStates}.

Notice that in the final schedule~(Figure~\ref{F:amazonSchedule}), the agents effectively act on top of the spills and towards the middle of the river to try and stop their spreading. Agent~$1$, in particular, saves cleaning reagent for one last action next to ocean. What is more, since it pays a lower price for actuation~($r_{i,k}^{(1)} < r_{i,k}^{(2)}$), it is able to use more reagent than Agent~$2$, which ends up concentrating its efforts in the beginning of its route~(Figures~\ref{F:amazonStates}). In the end, the vessels are able to mitigate the impact of the spills on the ocean waters, reducing contamination levels by~$60\%$ compared to the no-actuation solution. The final level achieved with these punctual actions is comparable to using an $64$~agents that actuate every state at every time step with cost matrices~$\bQ = \bI$ and~$r_{i,k} = 2500$~(Figure~\ref{F:amazonStates}).

\begin{figure}[tb]
	\centering
	\includesvg[width=\columnwidth]{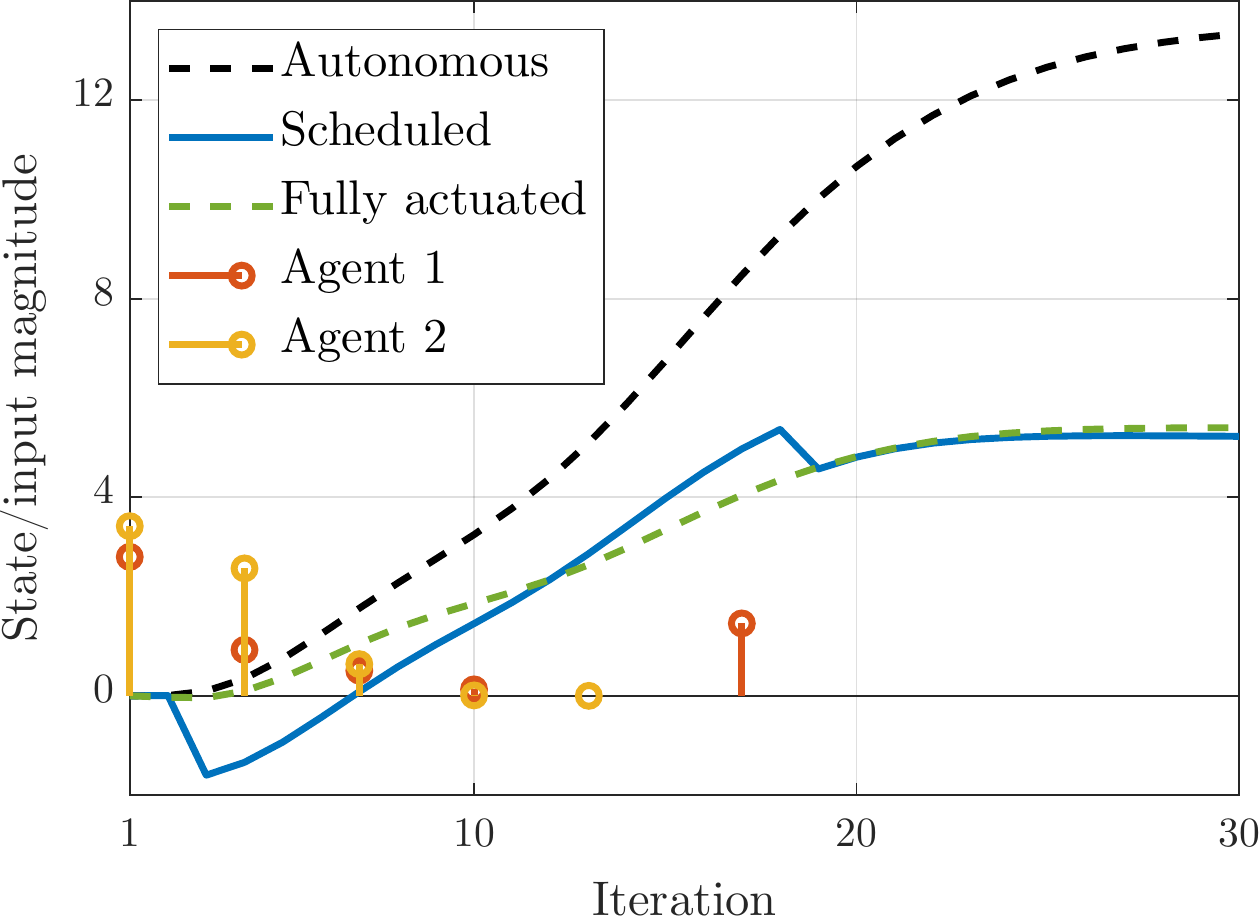}
	\caption{Agents actions and chemical concentrations in the ocean~(light blue mass in Figure~\ref{F:amazon}) for the autonomous system~(no agent), greedy schedule, and full actuation.}
	\label{F:amazonStates}
\end{figure}

\section{CONCLUSION}
\label{S: Conclusion}

We studied the control scheduling problem in which agents or actuators are assigned to act upon a dynamical system at specific times so as to minimize the LQG/LQR objective. We considered the case in which budget and operational scheduling constraints can be encoded as an intersection of $P$~matroids, which can describe any combination of limits on the number of agents deployed per time slot, total number of actuator uses, duty cycle restrictions\dots We showed that this structure allows schedules to be built greedily, i.e., by adding system inputs one-by-one choosing the one that most reduces the control cost. We then showed that for $\alpha$-supermodular objectives this procedure is $\alpha/(\alpha + P)$-optimal and provided an explicit, system-dependent lower bound on~$\alpha$ for the LQG/LQR cost. This bound approaches unity and yield supermodular-like guarantees in relevant application scenarios. Combining these results, we presented an approximation certificate for greedy scheduling that validates its empirical success. We believe that the near-optimality results derived for approximately supermodular functions are of independent interest and can be extended to other applications, such as experimental design using polymatroids. An important future challenge to overcome in these discrete control problems is taking into account state and actuation constraints.

\bibliographystyle{IEEEbib}
\bibliography{IEEEabrv,gsp,sp,math,control,stat}

\appendix

\section*{Proof of Proposition~\ref{T:LQG}}

\begin{proof}
This result follows directly from the classical dynamic programming argument for the LQG by considering only the inputs in~$\calS$~(see, e.g.,~\cite{Bertsekas17d}). We display the derivations here for ease of reference. Explicitly, we proceed by backward induction, first defining the cost-to-go function
\begin{multline}\label{E:costToGo}
	V_j(\calS) = \min_{\calU_j(\calS)}\ \E \Bigg[
		\sum_{k = j}^{N-1} \left( \bx_k^T \bQ_k \bx_k
			+ \sum_{i \in \calS \cap \calV_k} r_{i,k} u_{i,k}^2 \right)
	\\
	{}+ \bx_N^T \bQ_N \bx_N \Bigg]
		\text{,}
\end{multline}
where~$\calU_j(\calS) = \{u_{i,k} | i \in \calS \cap \calV_k\}_{k=j}^{N-1}$ is the subsequence of control actions from time~$j$ to the end of the control horizon. Notice due to the additivity of~\eqref{E:costToGo}, it admits the equivalent recursive definition
\begin{equation}\label{E:costToGoR}
	V_j(\calS) = \min_{\calU_j(\calS)} \E \Bigg[
		V_{j+1}(\calS) + \bx_{j}^T \bQ_{j} \bx_{j}
		+ \sum_{i \in \calS \cap \calV_{j}} r_{i,{j}} u_{i,{j}}^2
	\Bigg]
		\text{.}
\end{equation}

To proceed, we postulate that~\eqref{E:costToGo} has the form
\begin{equation}\label{E:Vj}
	V_j(\calS) = \bx_j^T \bP_j(\calS) \bx_j + q_j
		\text{,}
\end{equation}
for some~$\bP_j(\calS) \succ 0$ and~$q_j > 0$ and show that this is indeed the case by recursion. To do so, observe that for the base case~$j = N$, \eqref{E:costToGo} becomes~$V_N(\calS) = \bx_N^T \bQ_N \bx_N$ and~\eqref{E:Vj} holds trivially by taking~$\bP_{N}(\calS) = \bQ_N \succ 0$ and~$q_N = 0$. Now, assume that~\eqref{E:Vj} holds for~$j+1$. Using the system dynamics~\eqref{E:dynamics} in~\eqref{E:Vj}, we can expand~$V_{j}(\calS)$ to read
\begin{multline}\label{E:expandedVj}
	V_j(\calS) =
	\min_{\calU_j(\calS)} \E \Bigg[
		\bx_j^T \left(\bQ_j + \bA_j^T \bP_{j+1} \bA_j\right) \bx_j
	\\
	{}+ \left(\sum_{i\in\calS \cap \calV_j} \bb_{i,j} u_{i,j} \right)^T
		\bP_{j+1}(\calS)
	\left( \sum_{i\in\calS \cap \calV_j} \bb_{i,j} u_{i,j} \right)
	\\
	{}+ 2 \bx_j^T \bA_j^T \bP_{j+1}(\calS)
		\left(\sum_{i\in\calS \cap \calV_j} \bb_{i,j} u_{i,j}\right)
	\\
	{}+ 2 \bw_j^T \bP_{j+1}(\calS)
		\left(\sum_{i\in\calS \cap \calV_j} \bb_{i,j} u_{i,j}\right)
	+ 2 \bx_j^T \bA_j^T \bP_{j+1}(\calS) \bw_j
	\\
	{}+ \bw_j^T \bP_{j+1}(\calS) \bw_j
		+ \sum_{i \in \calS \cap \calV_{j}} r_{i,{j}} u_{i,{j}}^2 + q_{j+1}
	\Bigg]
		\text{.}
\end{multline}
Recall that the expected value is taken over the process noises noise~$\bw_k$ and the initial state~$\bx_0$. Note that the terms linear in~$\bw_j$ vanish since it is zero-mean and that~\eqref{E:expandedVj} is actually a quadratic optimization problem in the~$\{u_{i,j}\}$. This is straightforward to see by rewriting~\eqref{E:expandedVj} in matrix form:
\begin{equation}\label{E:expandedVj2}
\begin{aligned}
	V_j(\calS) &= \min_{\bu_j}\
		\bu_{j}^T \left( \bR_{j} + \bB_j^T \bP_{j+1}(\calS) \bB_j \right) \bu_{j}
	\\
	{}&+ 2 \bx_j^T \bA_j^T \bP_{j+1}(\calS) \bB_j \bu_{j}
	\\
	{}&+ \bx_j^T \left(\bQ_j + \bA_j^T \bP_{j+1} \bA_j\right) \bx_j
	\\
	{}&+ \trace\left( \bP_{j+1}(\calS) \bW_j \right) + q_{j+1}
		\text{,}
\end{aligned}
\end{equation}
where~$\bu_j = [u_{i,j}]_{i \in \calS \cap \calV_j}$ is a~$\abs{\calS \cap \calV_j} \times 1$ vector that collects the control actions, $\bB_j = [\bb_{i,j}]_{i \in \calS \cap \calV_j}$ is an~$n \times \abs{\calS \cap \calV_j}$ matrix whose columns contain the input vectors corresponding to each control action, $\bR_j = \diag(r_{i,j})$, and we used the that~$\E \left[ \bw_j^T \bP_{j+1}(\calS) \bw_j \right] = \trace\left( \bP_{j+1}(\calS) \bW_j \right)$. Observe that~\eqref{E:expandedVj2} is classical LQR problem, up to constant terms~\cite{Bertsekas17d}. Its minimum is then of the form~\eqref{E:Vj} with
\begin{equation*}
	\bP_j(\calS) = \bQ_j
	+ \bA_j^T \left( \bP_{j+1}^{-1}(\calS)
	+ \sum_{i \in \calS \cap \calV_j} r_{i,j}^{-1} \bb_{i,j} \bb_{i,j}^T \right)^{-1} \bA_j
		\text{,}
\end{equation*}
and~$q_j = \trace\left( \bP_{j+1}(\calS) \bW_j \right) + q_{j+1}$. Unrolling the recursion and using the fact that~$\E[\bx_0\bx_0^T] = \bSigma_0$ yields the result in~\eqref{E:optimalCost}.
\end{proof}

\section*{Proof of Proposition~\ref{T:alphaBound}}

\begin{proof}
Start by defining
\begin{equation}\label{E:Jk}
	J_k(\calS) = \trace\left[ \bPi_{k-1} \bP_{k}(\calS) \right]
\end{equation}
with~$\bPi_{-1} = \bSigma_0$ and~$\bPi_k = \bW_{k}$ for~$k = 0,\dots,N-2$ and from~\eqref{E:optimalCost}, notice that the objective~$J$ of~\eqref{P:actuatorScheduling} can be written as
\begin{equation}\label{E:expandJk}
	J(\calS) = \sum_{k = 0}^{N-1} J_k(\calS) + \trace\left[ \bW_{N-1} \bQ_{N} \right]
		- V^\star(\emptyset)
		\text{.}
\end{equation}
Hence, using the non-negative affine combination property in Proposition~\ref{T:preserveSM}, we can ignore the terms constant in~$\calS$ and lower bound the approximate supermodularity of~$J$ in terms of the approximate supermodularity of its components. Explicitly, if~$J_k$ in~\eqref{E:Jk} is~$\alpha_k$-supermodular, then~$J$ is~$\min(\alpha_k)$-supermodular.

We can further reduce the problem by using~\eqref{E:P} to get
\begin{equation}\label{E:expandJ}
\begin{aligned}
	J_k(\calS) &= \trace\left[
		\bH_k \left( \bP_{k+1}^{-1}(\calS)
			+ \sum_{i \in \calS \cap \calV_k} r_{i,k}^{-1} \bb_{i,k}^{} \bb_{i,k}^T
		\right)^{-1} \right]
	\\
	{}&+ \trace\left[ \bPi_{k-1} \bQ_k \right]
		\text{.}
\end{aligned}
\end{equation}
where we used the circular shift property of the trace to get~$\bH_k = \bA_{k} \bPi_{k-1} \bA_{k}^T$. Thus, applying Proposition~\ref{T:preserveSM} again, we can ignore the constant term in~\eqref{E:expandJ} and restrict to studying the~$\alpha$-supermodularity of
\begin{equation}\label{E:Jbar1}
	\bar{J}_k(\calS) = \trace\left[
		\bH_k \left( \bP_{k+1}^{-1}(\calS)
			+ \sum_{i \in \calS \cap \calV_k} r_{i,k}^{-1} \bb_{i,k}^{} \bb_{i,k}^T
		\right)^{-1}
	\right]
		\text{.}
\end{equation}

To proceed, notice that~$\bH_k \succ 0$ since~$\bPi_{k} \succ 0$ and~$\bA_k$ is full rank for all~$k$. Hence, it has a unique positive definite square root~$\bH^{1/2} \succ 0$ such that~$\bH = \bH^{1/2}\bH^{1/2}$~\cite{Horn13m}. Deploying the circular shift property of the trace and the invertibility of~$\bH^{1/2}$, \eqref{E:Jbar1} can be written as
\begin{equation}\label{E:Jbar2}
	\bar{J}_k(\calS) = \trace\left[
		\left( \bPt_{k+1}^{-1}(\calS)
			+ \sum_{i \in \calS \cap \calV_k} r_{i,k}^{-1} \bbt_{i,k}^{} \bbt_{i,k}^T
		\right)^{-1}
	\right]
		\text{.}
\end{equation}
with~$\bPt_{k+1}(\calS) = \bH_k^{1/2} \bP_{k+1}(\calS) \bH_k^{1/2}$ and~$\bbt_{i,k} = \bH_k^{-1/2} \bb_{i,k}$. The function~$\bar{J}_k$ in~\eqref{E:Jbar2} has a form that allows us to leverage the following result from~\cite[Thm.~3]{Chamon20a}, which we reproduce here for ease of reference:
\begin{lemma}\label{T:alphaSTF}
Let~$h:2^\calE \rightarrow \setR$ be the set trace function
\begin{equation} \label{E:STF}
	h\left(\calA\right) = \trace\left[\left(\bM_{\emptyset}
		+ \sum_{i \in \calA} \bM_i\right)^{-1}\right]
		\text{,}
\end{equation}
where~$\calA \subseteq \calE$, $\bM_{\emptyset} \succ 0$, and~$\bM_i \succeq 0$ for all~$i \in \calE$. Then, $h$ is (i)~monotonically decreasing and (ii)~$\alpha$-supermodular with
\begin{equation} \label{E:alphaBoundSTF}
	\alpha \geq \frac{\lambda_{\min}\left[\bM_{\emptyset}\right]}{\lambda_{\max}\left[\bM_{\emptyset} + \sum_{i \in \calV} \bM_i\right]} > 0
		\text{.}
\end{equation}
\end{lemma}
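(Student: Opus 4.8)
The plan is to treat both claims through the shorthand $\bM(\calX) = \bM_{\emptyset} + \sum_{i \in \calX}\bM_i$, so that $h(\calX) = \trace[\bM(\calX)^{-1}]$ and adding an element amounts to the positive-semidefinite perturbation $\bM(\calX) \mapsto \bM(\calX) + \bM_u \succeq \bM(\calX)$. Claim~(i) is then immediate: since $\bX \mapsto \bX^{-1}$ is antitone on the positive-definite cone, $\bM(\calX)^{-1} \succeq \bM(\calX \cup \{u\})^{-1}$, and taking traces gives $h(\calX) \geq h(\calX \cup \{u\})$, i.e. $\Delta_u h(\calX) \geq 0$. For claim~(ii) I would work with the singleton form of Definition~\ref{D:alphaSM} and rewrite each increment using the fundamental theorem of calculus applied to $\phi(t) = \trace[(\bM(\calX) + t\bM_u)^{-1}]$. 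Using $\tfrac{d}{dt}\,\bN(t)^{-1} = -\bN(t)^{-1}\bM_u\bN(t)^{-1}$, this yields the representation
\[
	\Delta_u h(\calX) = \int_0^1 \trace\!\big[\bM_u\,(\bM(\calX) + t\bM_u)^{-2}\big]\,dt \text{.}
\]

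The strategy is to compare the two integrands \emph{pointwise in $t$}. Writing $\bN_A = \bM(\calA) + t\bM_u$ and $\bN_B = \bM(\calB) + t\bM_u$, the inclusion $\calA \subset \calB$ gives $0 \prec \bN_A \preceq \bN_B$, and the key step is the matrix inequality
\[
	\bN_A^{-2} \succeq \frac{\lambda_{\min}[\bN_A]}{\lambda_{\max}[\bN_B]}\,\bN_B^{-2} \text{.}
\]
Because $\bM_u \succeq 0$, multiplying by $\bM_u$ and tracing (trace of a product of two positive-semidefinite matrices is nonnegative) turns this into the desired bound $\trace[\bM_u\bN_A^{-2}] \geq \tfrac{\lambda_{\min}[\bN_A]}{\lambda_{\max}[\bN_B]}\trace[\bM_u\bN_B^{-2}]$ on the integrands.

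Establishing this matrix inequality is the main obstacle, since the naive relation $\bN_A^{-1} \succeq \bN_B^{-1}$ cannot simply be squared in the L\"owner order. I would instead congruence-transform: $\bN_A^{-2} \succeq c\,\bN_B^{-2}$ is equivalent to $\bN_B\bN_A^{-2}\bN_B \succeq c\,\bI$, and since $\bN_B\bN_A^{-2}\bN_B = (\bN_A^{-1}\bN_B)^\top(\bN_A^{-1}\bN_B)$, it suffices to lower bound $\sigma_{\min}(\bN_A^{-1}\bN_B)^2 = \norm{\bN_B^{-1}\bN_A}^{-2}$. Factoring $\bN_B^{-1}\bN_A = \bN_B^{-1/2}\big(\bN_B^{-1/2}\bN_A\bN_B^{-1/2}\big)\bN_B^{1/2}$ and using submultiplicativity,
\[
	\norm{\bN_B^{-1}\bN_A} \leq \norm{\bN_B^{-1/2}}\,\norm{\bN_B^{-1/2}\bN_A\bN_B^{-1/2}}\,\norm{\bN_B^{1/2}} \leq \lambda_{\min}[\bN_B]^{-1/2}\cdot 1 \cdot \lambda_{\max}[\bN_B]^{1/2} \text{,}
\]
where the middle factor is at most $1$ because $\bN_A \preceq \bN_B$. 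As $\lambda_{\min}[\bN_B] \geq \lambda_{\min}[\bN_A]$, this gives $\norm{\bN_B^{-1}\bN_A}^2 \leq \lambda_{\max}[\bN_B]/\lambda_{\min}[\bN_A]$, i.e. $c = \lambda_{\min}[\bN_A]/\lambda_{\max}[\bN_B]$ is admissible. This congruence-plus-singular-value route is exactly what avoids the spurious squaring of the eigenvalue ratio that a cruder, term-by-term bound would produce.

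To conclude I would bound the eigenvalue factors uniformly in $t$. Since $\bM(\calA) \succeq \bM_{\emptyset}$ and $t\bM_u \succeq 0$, we get $\lambda_{\min}[\bN_A] \geq \lambda_{\min}[\bM_{\emptyset}]$; and since $\bM(\calB) + t\bM_u \preceq \bM(\calB \cup \{u\}) \preceq \bM_{\emptyset} + \sum_{i \in \calE}\bM_i$ for $t \in [0,1]$, we get $\lambda_{\max}[\bN_B] \leq \lambda_{\max}[\bM_{\emptyset} + \sum_{i \in \calE}\bM_i]$. Hence the integrand ratio is at least $\alpha = \lambda_{\min}[\bM_{\emptyset}]/\lambda_{\max}[\bM_{\emptyset} + \sum_{i \in \calE}\bM_i]$ for every $t$, and integrating the representation over $[0,1]$ yields $\Delta_u h(\calA) \geq \alpha\,\Delta_u h(\calB)$, which is~\eqref{E:alphaSM}. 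Positivity of the bound, $\alpha > 0$, is immediate from $\bM_{\emptyset} \succ 0$.
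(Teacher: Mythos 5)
Your proof is correct. Note, however, that the paper itself does not prove this lemma: it is imported verbatim from~\cite[Thm.~3]{Chamon20a} ``for ease of reference,'' so there is no in-paper argument to match against. Your derivation is therefore a genuine, self-contained alternative. Every step checks out: the integral representation $\Delta_u h(\calX) = \int_0^1 \trace[\bM_u(\bM(\calX)+t\bM_u)^{-2}]\,dt$ is valid, the congruence reduction of $\bN_A^{-2}\succeq c\,\bN_B^{-2}$ to a lower bound on $\sigma_{\min}(\bN_A^{-1}\bN_B)^2$ correctly sidesteps the non-monotonicity of squaring in the L\"owner order, and the uniform-in-$t$ eigenvalue bounds land exactly on \eqref{E:alphaBoundSTF} (you are right to read the $\calV$ in the denominator as the ground set $\calE$). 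The argument in the cited source is instead algebraic: it writes the increment in closed form via the resolvent identity, $\Delta_u h(\calX) = \trace[\bM(\calX)^{-1}\bM_u\bM(\calX\cup\{u\})^{-1}]$, and then bounds the ratio of increments using inequalities of the type $\trace[\bA\bB] \geq \lmin[\bA]\trace[\bB]$ for $\bA \succ 0$, $\bB \succeq 0$. That route is shorter and avoids calculus, but requires some care to avoid picking up the eigenvalue ratio squared---precisely the pitfall your congruence-plus-singular-value step is designed to dodge. Your integral formulation has the added benefit of localizing the comparison to a single positive-semidefinite perturbation direction $\bM_u$ at each $t$, which makes the pointwise comparison of integrands transparent; the price is the extra differentiation machinery. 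Either way the constant obtained is the same, and your write-up could be inserted as a standalone proof without appealing to the external reference.
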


\vspace{0.7\baselineskip}

Comparing~\eqref{E:Jbar2} and~\eqref{E:STF}, we can bound the~$\alpha_k$ for~$J_k$ as
\begin{equation}\label{E:preAlpha1}
	\alpha_k \geq \min_{\calS \subseteq \calVbar} \frac{
		\lambda_{\min} \left[ \bPt_{k+1}^{-1}(\calS) \right]
	}{
		\lambda_{\max} \left[ \bPt_{k+1}^{-1}(\calS)
			+ \sum_{i \in \calV_k} r_{i,k}^{-1} \bbt_{i,k}^{} \bbt_{i,k}^T \right]
	}
		\text{.}
\end{equation}
Nevertheless, the bound in~\eqref{E:preAlpha1} depends on the choice of~$\calS$. To obtain a closed-form expression, we use the following proposition:

\begin{proposition}\label{T:explicitBounds}
For any~$\calS \subseteq \calVbar$, it holds that
\begin{equation}\label{E:PBounds}
	\bPt_k(\calVbar) \preceq \bPt_k(\calS) \preceq \bPt_k(\emptyset)
		\text{, for } k = 1,\dots,N-1
		\text{.}
\end{equation}
\end{proposition}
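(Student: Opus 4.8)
The plan is to reduce the claim to a Loewner-order monotonicity statement for the Riccati-type iterates $\bP_k$ themselves, and then establish that statement by backward induction. Since $\bPt_k(\calS) = \bH_{k-1}^{1/2} \bP_k(\calS) \bH_{k-1}^{1/2}$ is obtained from $\bP_k(\calS)$ by congruence with the fixed, $\calS$-independent, positive definite matrix $\bH_{k-1}^{1/2}$, and congruence by an invertible matrix is an order-isomorphism of the positive-semidefinite cone, it suffices to prove
\[
	\bP_k(\calVbar) \preceq \bP_k(\calS) \preceq \bP_k(\emptyset), \quad k = 1,\dots,N .
\]
Applying the map $\bX \mapsto \bH_{k-1}^{1/2} \bX \bH_{k-1}^{1/2}$ to this chain then yields \eqref{E:PBounds}. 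The matrices $\bH_{k-1} \succ 0$ are well defined here precisely because $\bA_{k-1}$ is full rank and $\bPi_{k-1} \succ 0$.

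First I would record the three order facts the argument rests on: on the cone of positive definite matrices, inversion is order-reversing ($\bX \preceq \bY \Rightarrow \bY^{-1} \preceq \bX^{-1}$); congruence $\bX \mapsto \bA^T \bX \bA$ is order-preserving; and adding a positive-semidefinite matrix only increases a matrix in the order. I would also note that $\bQ_k \succ 0$ forces $\bP_k(\calS) \succ 0$ for every $k$ and $\calS$, so every inverse appearing in the recursion \eqref{E:P} is well defined.

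The backward induction runs from $k = N$ down to $k = 1$. The base case is immediate, since $\bP_N(\calS) = \bQ_N$ independently of $\calS$, so the chain holds with equality. For the inductive step, assume $\bP_{k+1}(\calVbar) \preceq \bP_{k+1}(\calS) \preceq \bP_{k+1}(\emptyset)$, and abbreviate the actuator term in \eqref{E:P} by $\bG_k(\calS) = \sum_{i \in \calS \cap \calV_k} r_{i,k}^{-1} \bb_{i,k} \bb_{i,k}^T \succeq \bzero$, which is monotone in $\calS \cap \calV_k$, so $\bzero = \bG_k(\emptyset) \preceq \bG_k(\calS) \preceq \bG_k(\calVbar)$. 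For the upper bound, the inductive hypothesis pushed through inversion gives $\bP_{k+1}^{-1}(\calS) \succeq \bP_{k+1}^{-1}(\emptyset)$, and combined with $\bG_k(\calS) \succeq \bzero$ this yields $\bP_{k+1}^{-1}(\calS) + \bG_k(\calS) \succeq \bP_{k+1}^{-1}(\emptyset)$; a single inversion flips the inequality, and congruence by $\bA_k$ together with the constant $\bQ_k$ recovers $\bP_k(\calS) \preceq \bP_k(\emptyset)$. The lower bound is symmetric: $\bP_{k+1}^{-1}(\calVbar) + \bG_k(\calVbar) \succeq \bP_{k+1}^{-1}(\calS) + \bG_k(\calS)$, and inverting, congruing by $\bA_k$, and adding $\bQ_k$ yields $\bP_k(\calVbar) \preceq \bP_k(\calS)$.

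The step I expect to require the most care is the bookkeeping of inequality directions as they pass through the inversions. Both the empty-set and the full-set bounds propagate through the \emph{same} recursion, but the inductive hypothesis enters them with opposite orientations, and one must check that the net effect of the order-reversing inversion — applied once to the hypothesis and once to the combined term $\bP_{k+1}^{-1} + \bG_k$ — restores the original orientation so that each bound transfers cleanly from level $k+1$ to level $k$. Once this is verified, the induction closes and the congruence by $\bH_{k-1}^{1/2}$ delivers \eqref{E:PBounds}.
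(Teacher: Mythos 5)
Your proof is correct and follows essentially the same route as the paper's: reduce to the untransformed iterates $\bP_k$ via the order-preserving congruence by $\bH^{1/2}$, then run a backward induction from $\bP_N = \bQ_N$ using operator antitonicity of inversion and monotonicity of the actuator term. The only cosmetic difference is that the paper splits the inductive step into two separate monotonicity observations (dropping or adding the actuators at time $k$, then propagating the level-$(k+1)$ bound), while you fold them into a single inequality chain.
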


Using~\eqref{E:PBounds} in~\eqref{E:preAlpha1} and the fact that matrix inversion is operator antitone yields the bound in~\eqref{E:alphaBound}.
\end{proof}

All that remains is therefore to prove Proposition~\ref{T:explicitBounds}.

\begin{proof}[Proof of Proposition~\ref{T:explicitBounds}]
Start by noticing that since~$\bH^{1/2}$ is full rank, it is enough to establish~\ref{T:explicitBounds} directly for~$\bP_k$. Indeed, $\bA \preceq \bB \Leftrightarrow \bx^T \bA \bx \leq \bx^T \bB \bx$ for all~$\bx \in \setR^n$ and for~$\bM \succ 0$, this is equivalent to taking~$\bx = \bM \by$ for all~$\by \in \setR^n$.

We prove both inequalities by recursion. For the upper bound in~\eqref{E:PBounds}, note from~\eqref{E:P} that~$\bPt_k$ can be increased by using no actuators at instant~$k$. Formally, for any choice of~$\calS \subseteq \calVbar$,
\begin{align*}
	\bP_k(\calS) &= \bQ_k
			+ \bA_{k}^T \left( \bP_{k+1}^{-1}(\calS)
			+ \sum_{i \in \calS \cap \calV_k} r_{i,k}^{-1} \bb_{i,k}^{} \bb_{i,k}^T \right)^{-1} \bA_{k}
	\\
	{}&\preceq \bQ_k + \bA_k^T \bP_{k+1}(\calS) \bA_k = \bP_k(\calS \setminus \calV_k)
		\text{.}
\end{align*}
Additionally, if~$\bPb_{k+1} \succeq \bP_{k+1}(\calS)$ for all~$\calS \subseteq \calVbar$, it holds that
\begin{equation}\label{E:Pbar}
	\bP_k(\calS) \preceq \bQ_k + \bA_k^T \bPb_{k+1} \bA_k
		\triangleq \bPb_k
		\text{.}
\end{equation}
Starting from~$\bP_N \preceq \bQ_N \triangleq \bPb_N$, we obtain that~$\bP_k$ is upper bounded by taking~$\calS = \emptyset$, i.e., using no actuators.

The lower bound is obtained in a similar fashion by using all possible actuators. Explicitly,
\begin{align*}
	\bP_k(\calS) &= \bQ_k
			+ \bA_{k}^T \left( \bP_{k+1}^{-1}(\calS)
			+ \sum_{i \in \calS \cap \calV_k} r_{i,k}^{-1} \bb_{i,k}^{} \bb_{i,k}^T \right)^{-1} \bA_{k}
	\\
	{}&\succeq \bQ_k
			+ \bA_{k}^T \left( \bP_{k+1}^{-1}(\calS)
			+ \sum_{i \in \calV_k} r_{i,k}^{-1} \bb_{i,k}^{} \bb_{i,k}^T \right)^{-1} \bA_{k}
	\\
	{}&= \bP_k(\calS \cup \calV_k)
		\text{.}
\end{align*}
Moreover, if~$\bPu_{k+1} \preceq \bP_{k+1}(\calS)$ for all~$\calS \subseteq \calVbar$, we have that
\begin{equation}
	\bP_k(\calS) \succeq \bQ_k
			+ \bA_{k}^T \left( \bPu_{k+1}^{-1}
			+ \sum_{i \in \calV_k} r_{i,k}^{-1} \bb_{i,k}^{} \bb_{i,k}^T \right)^{-1} \bA_{k}
		\triangleq \bPu_{k}
		\text{.}
\end{equation}
Starting from~$\bP_N \succeq \bQ_N \triangleq \bPu_N$ yields the lower bound in~\eqref{E:PBounds}.
\end{proof}

\begin{IEEEbiography}[{\includegraphics[width=1in,height=1.25in]{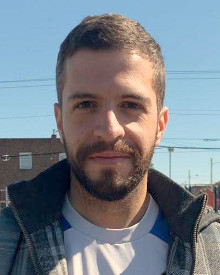}}]{Luiz F. O. Chamon (S'12--M'21)}
received the B.Sc.\ and M.Sc.\ degrees in electrical engineering from the University of S\~{a}o Paulo Paulo, S\~{a}o Paulo Paulo, Brazil, in 2011 and 2015 and the Ph.D.\ degree in electrical and systems engineering from the University of Pennsylvania (Penn), Philadelphia, in 2020. He is currently a postdoctoral researcher of the Department of Electrical and Systems Engineering of the University of Pennsylvania. In 2009, he was an undergraduate exchange student of the Masters in Acoustics of the \'{E}cole Centrale de Lyon, Lyon, France, and worked as an Assistant Instructor and Consultant on nondestructive testing at INSACAST Formation Continue. From 2010 to 2014, he worked as a Signal Processing and Statistics Consultant on a research project with EMBRAER. In 2018, he was recognized by the IEEE Signal Processing Society for his distinguished work for the editorial board of the IEEE Transactions on Signal Processing. He also received both the best student paper and the best paper awards at IEEE ICASSP 2020. His research interests include optimization, signal processing, machine learning, statistics, and control.
\end{IEEEbiography}

\begin{IEEEbiography}[{\includegraphics[width=1in,height=1.25in]{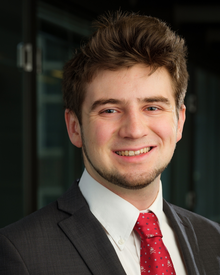}}]{Alexandre Amice}
received the B.S.E.\ in Electrical Engineering and Mathematics and the M.S.E.\ in Robotics from the University of Pennsylvania, Pennsylvania, USA in 2020. His research interests include optimization, dynamical systems, and control.
\end{IEEEbiography}

\begin{IEEEbiography}[{\includegraphics[width=1in,height=1.25in]{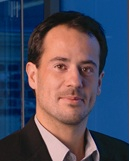}}]{Alejandro Ribeiro}
received the B.Sc.\ degree in electrical engineering from the Universidad de la Republica Oriental del Uruguay, Montevideo, in 1998 and the M.Sc.\ and Ph.D.\ degree in electrical engineering from the Department of Electrical and Computer Engineering, the University of Minnesota, Minneapolis in 2005 and 2007.

From 1998 to 2003, he was a member of the technical staff at Bellsouth Montevideo. After his M.Sc. and Ph.D studies, in 2008 he joined the University of Pennsylvania (Penn), Philadelphia, where he is currently the Rosenbluth Associate Professor at the Department of Electrical and Systems Engineering. His research interests are in the applications of statistical signal processing to the study of networks and networked phenomena. His focus is on structured representations of networked data structures, graph signal processing, network optimization, robot teams, and networked control.

Dr. Ribeiro received the 2014 O. Hugo Schuck best paper award, and paper awards at CDC 2017, 2016 SSP Workshop, 2016 SAM Workshop, 2015 Asilomar SSC Conference, ACC 2013, ICASSP 2006, and ICASSP 2005. His teaching has been recognized with the 2017 Lindback award for distinguished teaching and the 2012 S. Reid Warren, Jr. Award presented by Penn's undergraduate student body for outstanding teaching. Dr. Ribeiro is a Fulbright scholar class of 2003 and a Penn Fellow class of 2015.
\end{IEEEbiography}

\end{document}